\documentclass[12pt]{amsart}
\usepackage{amssymb, amscd, amsmath, amsfonts, amsthm, mathrsfs}
\usepackage[T1]{fontenc}
\usepackage{mathrsfs}
\usepackage{anysize}
\usepackage[margin=2cm]{geometry}

\setlength{\unitlength}{2mm}
\setlength{\parskip}{0.3cm}

\setcounter{secnumdepth}{5}
\newcommand{\x}{\textbf}
\newcommand{\q}{\quad}

\newcommand{\mbb}{\mathbb}
\newcommand{\p}{\prime}
\theoremstyle{plain}
\newtheorem{thm}{Theorem}[section]
\newtheorem{lem}[thm]{Lemma}
\newtheorem{prop}[thm]{Proposition}
\newtheorem{cor}[thm]{Corollary}
\newtheorem{defi}[thm]{Definition}
\newcommand{\thmref}[1]{Theorem~\ref{#1}}
\newcommand{\lemref}[1]{Lemma~\ref{#1}}

\newcommand{\propref}[1]{Proposition~\ref{#1}}
\newcommand{\corref}[1]{Corollary~\ref{#1}}
\newtheorem{rmk}[thm]{Remark}

\begin{document}

\title{Non-vanishing of Jacobi Poincar\'{e} series}

\author{SOUMYA DAS}
\address{Harish Chandra Research Institute\\ 
         Chhatnag Road\\  
         Jhusi Allahabad 211019, India.}
\email{somu@hri.res.in, soumya.u2k@gmail.com}
\subjclass[2000]{Primary 11F50; Secondary 11F55}
\keywords{Jacobi forms, Poincar\'{e} series, Kloosterman sums}

\begin{abstract}
We prove that under suitable conditions, the Jacobi Poincar\'{e} series of exponential type of integer weight and matrix index does not vanish identically. For classical Jacobi forms, we construct a basis consisting of the ``first'' few Poincar\'{e} series and also give conditions both dependent and independent of the weight, which ensures non-vanishing of classical Jacobi Poincar\'{e} series. Equality of certain Kloosterman-type sums is proved. Also, a result on the non-vanishing of Jacobi Poincar\'{e} series is obtained when an odd prime divides the index.  
\end{abstract}
\maketitle

\section{Introduction}
\numberwithin{equation}{section}
In \cite{rankin} R. A. Rankin has proved that the $m$-\textit{th} Poincar\'{e} series $P^{k}_{m}$
of weight $k$, where $k,m$ are positive integers, for the full modular group $SL(2,\mbb{Z})$ is not identically 
zero for sufficiently large $k$ and finitely many $m$ depending on $k$. C. J. Mozzochi extended Rankin's result to integral weight modular forms for congruence subgroups in \cite{mozzochi}. 

In this paper we prove similar results for higher degree Jacobi Poincar\'{e} series defined on the full Jacobi group $\Gamma_{g}^{J} = SL(2,\mbb{Z}) \ltimes ( \mbb{Z}^{g} \times \mbb{Z}^{g}) $, where $g$ is a positive integer and is referred to as the degree of the Jacobi group. The Jacobi group operates on $\mathcal{H} \times \mbb{C}^{g}$ and also on functions $ \phi \colon \mathcal{H} \times \mbb{C}^{g} \rightarrow \mbb{C}$. We denote the latter action by $\mid_{k,m}$. (See section~\ref{preliminaries} for the definitions.)

Let $k,g \in \mbb{Z}$, $m$ a symmetric, positive-definite, half-integral $(g \times g)$ matrix. The vector space of Jacobi cusp forms of weight $k$, index $m$ and degree $g$, denoted by $J_{k,m,g}^{cusp}$ is defined to be the space of holomorphic functions $\phi \colon  \mathcal{H} \times \mbb{C}^{g} \rightarrow \mbb{C}$ satisfying $\phi|_{k,m}\gamma = \phi$ (where $\gamma \in \Gamma_{g}^{J}$) and having a Fourier expansion \[ \phi(\tau,z) = \underset{n \in \mbb{N}, r \in \mbb{Z}^{g} , 4n > m^{-1}[r^{t}]} \sum c_{\phi}(n,r) e(n\tau+rz) \]
If $g=1$, we denote $J_{k,m,1}^{cusp}$ by $J_{k,m}^{cusp}$.

For $n \in \mbb{N}$, $r \in \mbb{Z}^{g}$ with $4n > m^{-1}[r^{t}]$, let $P_{n,r}^{k,m}$ be the $(n,r)$-\textit{th} Poincar\'{e} series of weight $k$ and index $m$ (of exponential type) defined for $k > g + 2$ as in \cite{kohnen} (see Section~\ref{preliminaries} for definition). It is well-known that the Poincar\'{e} series $P_{n,r}^{k,m}$ ($n \in \mbb{Z}, r \in \mbb{Z}^{g}$) span $J_{k,m,g}^{cusp}$. It is then natural to ask when such a Poincar\'{e} series vanish identically or when it does not. We prove the following theorem, which gives a partial answer to the above question.

Let  $D  = \det {\left( \begin{smallmatrix}
2n & r \\
r^{t} & 2m  \end{smallmatrix}\right) }$ and define $k^{\p} := k - g/2 -1$.

\begin{thm} \label{order1+}
Let $k$ be even when  $2r \equiv 0 \pmod{\mbb{Z}^{g} \cdot 2m }$. Then there exist an integer $k_{0}$ and a constant $B > 3 \log{2}$ such that for all $k \geq k_{0}$ (depending only on $g$), the Jacobi Poincar\'{e} series $P_{n,r}^{k,m}$ does not vanish identically for \[ k^{\p} \leq \frac{\pi D}{\det{(2m)}} \leq {k^{\p}}^{1+\alpha(g)}  \exp \left\{- \frac{  B \log{k^{\p}}}{\log{ \log{k^{\p}}}} \right\} ,\]
where $\alpha(g) = \left\{ \begin{array}{ccc} 
\frac{2}{3(g+2)} & \mbox{ if } & 1 \leq g \leq 4, \\
\frac{2}{3g} & \mbox{ if }  & g \geq 5. 
\end{array} \right.$ 
\end{thm}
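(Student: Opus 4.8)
The standard approach to non-vanishing of Poincaré series is to compute the $(n,r)$-th Fourier coefficient of $P_{n,r}^{k,m}$ itself and show it is nonzero in the stated range. Let me think about what that coefficient looks like.

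For Jacobi Poincaré series, the Petersson-type formula gives the Fourier coefficients of $P_{n,r}^{k,m}$ in terms of a "main term" (coming from the identity coset, which for the diagonal coefficient $(n,r)$ contributes something like $1$ or a rational constant depending on the stabilizer) plus a sum over nontrivial cosets involving Jacobi–Kloosterman sums times Bessel functions. Schematically, the $(n,r)$-th coefficient $g_{n,r}(n,r)$ of $P_{n,r}^{k,m}$ is

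$$g_{n,r}(n,r) = \lambda + (\text{Kloosterman-Bessel sum}),$$

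where $\lambda$ is a nonzero constant (a small power of $2$, or a rational number bounded away from $0$) and the tail is

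$$\sum_{c \ge 1} \frac{H_{c}(\ldots)}{c^{\text{something}}} \, \mathcal{J}_{k'}\!\left(\frac{\pi D}{c \det(2m)}\right),$$

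with $\mathcal{J}_{k'}$ a Bessel-type function of order $k' = k - g/2 - 1$ and $H_c$ a Jacobi–Kloosterman sum of modulus related to $c$. So the whole problem reduces to showing the tail is smaller in absolute value than $\lambda$.

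The plan is as follows. First I would write down the Petersson formula for $J_{k,m,g}^{\mathrm{cusp}}$ precisely (this should be recalled from \cite{kohnen} or reproved in the preliminaries section), isolating the main term and the Kloosterman tail, and noting that the parity hypothesis on $k$ when $2r \equiv 0 \pmod{\mathbb{Z}^g \cdot 2m}$ is exactly what is needed to prevent the main term from cancelling against a single $c$-term or from vanishing identically. Second, I would estimate the Jacobi–Kloosterman sums $H_c$ trivially (or with a Weil-type bound — but a trivial bound in $c$ times a divisor-function factor is likely enough here), getting $|H_c| \ll c^{\beta} d(c)^{\text{O}(1)}$ for an explicit $\beta$ depending on $g$. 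Third — and this is the technical heart — I would bound the Bessel function $\mathcal{J}_{k'}(x)$ in the transition/exponentially-small range. The condition $k' \le \pi D/\det(2m) \le {k'}^{1+\alpha(g)}\exp\{-B\log k'/\log\log k'\}$ is designed so that the argument $x = \pi D/(c\det(2m))$ is, for $c = 1$, just above the turning point $x = k'$ where $J_{k'}(x)$ is of order $x^{-1/3}$, while for $c \ge 2$ the argument drops below $k'$ into the region where $J_{k'}(x)$ decays exponentially like $\exp\{-k' \cdot \phi(x/k')\}$ for a positive function $\phi$. Summing the exponentially small contributions over $c \ge 2$ against the polynomial Kloosterman bound then gives a tail of size $\exp\{-ck'\}$ or so, which is dominated by the $x^{-1/3}$-sized $c=1$ term — and that term, being a single Bessel value times a single Kloosterman sum, must be compared against $\lambda$; the upper bound ${k'}^{1+\alpha(g)}\exp\{-B\log k'/\log\log k'\}$ with $B > 3\log 2$ is precisely calibrated so that even the $c=1$ Bessel term stays below $\lambda$ in absolute value.

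The main obstacle I anticipate is the delicate uniform estimate of the Bessel function near and below the turning point $x = k'$, together with getting the dependence of the exponent $\alpha(g)$ to come out as stated (the split at $g = 4$ strongly suggests it is governed by the quality of the available bound on the Jacobi–Kloosterman sum of degree $g$, which presumably improves — relative to the trivial bound — once $g \ge 5$, perhaps via an induction on $g$ or a known estimate for Salié/Kloosterman sums in several variables). Carefully tracking all the implied constants so that $k_0$ depends only on $g$, and verifying that the parity condition genuinely rules out the degenerate cases, are the remaining points requiring care; the rest is estimation of the type carried out by Rankin \cite{rankin} and Mozzochi \cite{mozzochi} in the elliptic modular case, adapted to the Jacobi setting.
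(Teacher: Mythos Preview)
Your overall framework---compute the diagonal Fourier coefficient via the Petersson/B\"ocherer--Kohnen formula, separate the main term $\lambda$ from the Kloosterman--Bessel tail, and bound the tail below $|\lambda|$---matches the paper's. But the way you propose to split and estimate the tail contains a genuine error that would make the argument fail in the stated range.

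You write that for $c=1$ the Bessel argument $x = \pi D/(c\det(2m))$ is ``just above the turning point $x=k'$'', while for $c\ge 2$ it ``drops below $k'$'' and $J_{k'}$ decays exponentially. This is false in the regime of the theorem: at the upper end of the allowed range, $\pi D/\det(2m)$ is of size roughly ${k'}^{1+\alpha(g)}$, so for $c=2,3,\ldots$ up to $c \approx Q^{*}:= 2\pi D/(k'\det(2m))$ (which is of order ${k'}^{\alpha(g)}$, not $1$) the argument is still $\ge k'$ and there is \emph{no} exponential decay---only the oscillatory/Airy bound $|J_{k'}(x)| \ll x^{-1/3}$ or $\ll {k'}^{-1/3}$. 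The paper therefore splits the tail not at $c=1$ versus $c\ge 2$ but at $c\le Q^{*}$ versus $c>Q^{*}$ (with a further subdivision of the second piece at $c = k'Q^{*}$), and it is the \emph{aggregate} of the $c\le Q^{*}$ terms, each only polynomially small and weighted by the Kloosterman bound $2^{\omega(c)}c^{g/2-1}(D,c)$, that produces the factors $(\pi D/\det(2m))^{g/2}/{k'}^{g/2+1/3}$ governing $\alpha(g)$. Your proposed split would leave an unbounded number of non-negligible terms unaccounted for.

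Secondly, your guess about the origin of the dichotomy at $g=4$ is incorrect: the paper uses the \emph{same} Kloosterman estimate~(\ref{kloostermanestimate}) for all $g$. The two values of $\alpha(g)$ arise because the bounds for the ``small $c$'' piece $S_1$ (dominated by $(\pi D/\det(2m))^{g/2}/{k'}^{g/2+1/3}$) and the ``large $c$'' piece $S_2$ (dominated by $(\pi D/\det(2m))^{g/2+1+\epsilon}/{k'}^{g/2+3/2+\epsilon}$) impose different constraints on $D$, and which one is tighter switches at $g=5$. So the split is a feature of the interplay between the Bessel and Kloosterman exponents, not of an improved sum bound in higher degree.
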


We construct a basis of $ J_{k,m}^{cusp}$ consisting of the ``first'' $\dim{ J_{k,m}^{cusp}} $ Poincar\'{e} series (see \thmref{classicalbasis} in section~\ref{g=1}). We also give conditions for non-vanishing of Poincar\'{e} series independent of the weight for classical Jacobi forms ($g=1$). 

Define $M(x) := \exp \left\{ \frac{  B_{1} \log{x}} {\log{ \log{2x}}} \right\}$ ($x \geq 2, \, B_{1} > \log{2}$) as in \cite{rankin}.

\begin{thm} \label{g=1,indepofk}
Let $g=1$. For $D > \frac {m}{\pi}$, we have $P^{k,m}_{D,r} \not \equiv 0$ for \[  M \left( \frac{\pi D}{m} \right) \, \sigma_{0}(D) \, D < \frac{m^{\frac{8}{7} }} { \lambda} , \]  where $\lambda = ( 2 \sqrt{2} \pi^{  \frac{5}{3}} A)^{ \frac{3}{2}}  $, $ A = \frac{1} { \pi} \left( \frac{2} {6^{\frac{2}{3} }} + \frac{54}{2^{\frac{5}{6}}} + \frac{16}{2^{\frac{3}{4}}} \right)$ 
and $\sigma_{0} (D) = \underset{d \mid D} \sum 1$.
\end{thm}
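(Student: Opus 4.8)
The plan is to run the Petersson--Rankin strategy, the one genuinely new ingredient being that every estimate must be made uniform in the weight $k$. Using the Petersson coincidence relation $\langle\phi,P^{k,m}_{D,r}\rangle=c_{k,m,D}\,c_{\phi}(n,r)$ ($\phi\in J^{cusp}_{k,m}$, $c_{k,m,D}>0$, $n=(D+r^{2})/(4m)$) and specialising $\phi=P^{k,m}_{D,r}$, one gets $\Vert P^{k,m}_{D,r}\Vert^{2}=c_{k,m,D}\,g_{D,r}(n,r)$, where $g_{D,r}$ denotes the Fourier coefficients of $P^{k,m}_{D,r}$; so $P^{k,m}_{D,r}\not\equiv 0$ if and only if the ``diagonal'' coefficient $g_{D,r}(n,r)$ is nonzero. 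Writing out its Fourier expansion (Section~\ref{preliminaries}, cf.\ \cite{kohnen}), for $g=1$ one has
\[ g_{D,r}(n,r)=\delta+2\pi\,\varepsilon_{k}\,T,\qquad T=\sum_{c\geq 1}\frac{H_{m,c}(n,r;n,r)}{c}\,J_{k-3/2}\!\left(\frac{\pi D}{mc}\right), \]
with $\delta\in\{1,2\}$ the cusp-stabiliser contribution ($\delta=2$ exactly when $2r\equiv 0\pmod{\mathbb{Z}\cdot 2m}$), $\vert\varepsilon_{k}\vert=1$, and $H_{m,c}$ the attached Kloosterman-type sum of modulus $c$. It thus suffices to prove $2\pi\,\vert T\vert<\delta$, and $2\pi\,\vert T\vert<1$ will do.

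To estimate $\vert T\vert$ I would combine, term by term, a Weil-type bound for the Kloosterman sums, $\vert H_{m,c}(n,r;n,r)\vert\le\sigma_{0}(c)\,\gcd(D,c)^{1/2}\,c^{1/2}$ --- here the identity between Kloosterman-type sums established earlier (cf.\ the abstract) is what lets one rewrite $H_{m,c}$ via classical Sali\'e/Kloosterman sums and apply Weil's estimate cleanly --- with bounds for the Bessel factor. Since $k>g+2=3$ forces $k-\tfrac32\ge\tfrac52$, I would split the $c$-sum according to the size of the argument $t_{c}=\pi D/(mc)$ into three ranges: for $t_{c}$ small the monotone power-series bound $J_{\nu}(t)\le(t/2)^{\nu}/\Gamma(\nu+1)\le(t/2)^{5/2}/\Gamma(7/2)$; in an intermediate range a Landau-type bound $\vert J_{\nu}(t)\vert\le\kappa\,t^{-1/3}$ uniform in $\nu$; and in the remaining range the oscillatory bound $\vert J_{\nu}(t)\vert\le\kappa'\,t^{-1/2}$, the break-points between ranges being chosen to balance the contributions.

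Summing in each range the product of the divisor-bounded Kloosterman estimate and the matching Bessel estimate, the tail (small $t_{c}$, large $c$) is super-convergent and negligible, while each of the two other ranges contributes a quantity of the shape $\kappa_{i}\,M(\pi D/m)\,\sigma_{0}(D)\,(D/m^{8/7})^{2/3}$; here $M$ absorbs $\sigma_{0}(c)$ ($\le M(c)$ up to the break-point, as in \cite{rankin}) and $\sigma_{0}(D)$ comes out of summing $\gcd(D,c)^{1/2}$ over $c$. Collecting the three numerical constants into $A$ --- its three summands being exactly the three range-contributions, the exponents $\tfrac23,\tfrac56,\tfrac34$ and the coefficients $2,54,16$ coming from the explicit Bessel inequalities used --- one is led to
\[ 2\pi\,\vert T\vert\;\le\;\bigl(2\sqrt{2}\,\pi^{5/3}A\bigr)\Bigl(\frac{M(\pi D/m)\,\sigma_{0}(D)\,D}{m^{8/7}}\Bigr)^{2/3}. \]
As $\lambda=(2\sqrt{2}\,\pi^{5/3}A)^{3/2}$, the hypothesis $M(\pi D/m)\,\sigma_{0}(D)\,D<m^{8/7}/\lambda$ makes the right-hand side $<1$, giving $g_{D,r}(n,r)\neq0$ and hence $P^{k,m}_{D,r}\not\equiv 0$; the hypothesis $D>m/\pi$ is used only to guarantee $\pi D/m>1$, so that $M(\pi D/m)$ and the first of the three ranges are in force.

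The main obstacle is exactly this uniformity in $k$. Unlike in \thmref{order1+} (and unlike Rankin's elliptic case), no lower bound on $D$ in terms of $k$ is assumed, so $t_{1}=\pi D/m$ may sit anywhere relative to the order $k-\tfrac32$; the rapidly-decaying power-series estimate is then useless on most of the range, and one is forced onto the weaker $k$-uniform bounds $t^{-1/3}$ and $t^{-1/2}$. Arranging the three-range split so that the losses from these balance, and then tracking every constant precisely enough to land on the stated threshold $m^{8/7}/\lambda$ (in particular to produce the exponent $\tfrac23$ and the explicit constant $A$), is where essentially all the work lies; the Petersson coincidence formula and the divisor-sum bounds are routine.
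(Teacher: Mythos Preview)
Your outline diverges from the paper's proof in several places, and at least two of these are genuine gaps rather than alternative routes.

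First, and most concretely, the constant $A$ is not assembled from three range-contributions in the $c$-sum. In the paper $A$ is simply the explicit numerical constant in Guo's uniform estimate $|J_{\nu}(t)|\le A\,t^{-1/3}$, valid for all $t\ge 1$ and $\nu\ge 0$; the three summands with the exponents $\tfrac23,\tfrac56,\tfrac34$ arise inside the proof of that single Bessel inequality (see \cite{bessel1} and \cite[p.~333]{stein}), not from any decomposition of the Kloosterman sum. Accordingly the paper uses only a \emph{two}-range split at $c=\pi D/m$: for $c\le \pi D/m$ the single bound $|J_{k^{\prime}}(t)|\le A\,t^{-1/3}$ is applied throughout; for $c>\pi D/m$ the power-series bound $|J_{k^{\prime}}(t)|\le t^{k^{\prime}}/\Gamma(k^{\prime}+1)$ gives a harmless tail $O\!\left(D^{7/2}/\Gamma(k^{\prime}+1)\right)$. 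Your three-range split, with the constants you describe, would not reproduce the stated $\lambda$ or the exponent $\tfrac87$.

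Second, the ``oscillatory bound'' $|J_{\nu}(t)|\le \kappa'\,t^{-1/2}$ that you want to use on one range is \emph{not} uniform in the order $\nu$: near the turning point $t\approx\nu$ one has $J_{\nu}(\nu)\asymp\nu^{-1/3}$, so any constant $\kappa'$ must depend on $\nu=k-\tfrac32$. Invoking it therefore smuggles $k$-dependence back in and defeats the whole purpose of the theorem. The $t^{-1/3}$ bound is, up to constants, the sharpest inequality that is uniform in $\nu$ for $t\ge 1$, and this is exactly why the paper's argument pivots on it alone.

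Third, the Weil-type Kloosterman bound with the factor $(D,c)^{1/2}$ that you invoke is not available for general index $m$. The identity between Kloosterman-type sums in the paper (\propref{kloorelns}) is proved only for $m=1$; for arbitrary $m$ the paper has only the weaker estimate~(\ref{kloostermanestimate}) with the full factor $(D,c)$, and this is what is used in the proof. The final shape $|S_{1}|\le 2\sqrt{2}\,\pi^{5/3}A\,D^{2/3}M(\pi D/m)\,\sigma_{0}(D)\,m^{-7/6}$ that leads to the stated threshold comes from exactly this combination: the $(D,c)$-bound, the uniform $t^{-1/3}$ Bessel estimate, and summing $(D,c)$ over $c\le \pi D/m$.
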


Finally following \cite{rankin}, we give conditional statements on the non-vanishing of Jacobi Poincar\'{e} series, based on the relation of $g$-dimensional Kloosterman sums with corresponding $1$-dimensional sums and identities involving them.

\begin{thm}\label{consecutive}
Let $p$ be an odd prime, $\mu \in \mbb{N}$. Suppose \q $ p|(m,r), \,  p \nmid n.$ If $P_{p^{\mu}n,p^{\mu}r}^{k,p^{\mu}m} \not\equiv 0$  then 
\begin{equation}\label{consecutivekloo} \mbox{either } \, P_{np^{\mu-1},rp^{\mu-1}}^{k,mp^{\mu-1}} \not\equiv 0 \q \q or \q P_{np^{2\mu},rp^{2\mu}}^{k,p^{2\mu}m} \not\equiv 0 \q and \q P_{n,rp^{\mu}}^{k,p^{2\mu}m} \not\equiv 0 \end{equation}
\end{thm}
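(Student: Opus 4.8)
The plan is to reduce \thmref{consecutive} to a single exact identity between one Fourier coefficient of $P^{k,p^{\mu}m}_{p^{\mu}n,\,p^{\mu}r}$ and Fourier coefficients of the three Poincar\'{e} series on the right, and then to feed into that identity the Kloosterman-type equalities established earlier in the paper. Write $g^{k,M}_{n_1,r_1}(n_2,r_2)$ for the $(n_2,r_2)$-th Fourier coefficient of $P^{k,M}_{n_1,r_1}$. I would use the reproducing property of Poincar\'{e} series: for $\phi\in J^{cusp}_{k,M,g}$ the Petersson product $\langle\phi,\,P^{k,M}_{n',r'}\rangle$ is a non-zero scalar multiple of $\overline{c_{\phi}(n',r')}$; hence $P^{k,M}_{n',r'}\equiv 0$ forces \emph{all} its Fourier coefficients to vanish, and $g^{k,M}_{n_1,r_1}(n_2,r_2)$ is a non-zero constant multiple of $\overline{\langle P^{k,M}_{n_1,r_1},\,P^{k,M}_{n_2,r_2}\rangle}$, so this coefficient vanishes whenever \emph{either} $P^{k,M}_{n_1,r_1}$ or $P^{k,M}_{n_2,r_2}$ is identically zero. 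It therefore suffices to prove
\[ g^{k,p^{\mu}m}_{p^{\mu}n,\,p^{\mu}r}\!\left(p^{\mu}n,\,p^{\mu}r\right) \;=\; A\, g^{k,mp^{\mu-1}}_{np^{\mu-1},\,rp^{\mu-1}}\!\left(np^{\mu-1},\,rp^{\mu-1}\right) \;+\; B\, g^{k,p^{2\mu}m}_{np^{2\mu},\,rp^{2\mu}}\!\left(n,\,rp^{\mu}\right) \]
for suitable non-zero constants $A,B$ depending only on $k,g,m,p,\mu$. Granting this, if $P^{k,mp^{\mu-1}}_{np^{\mu-1},rp^{\mu-1}}\equiv 0$ the first term dies; if moreover $P^{k,p^{2\mu}m}_{np^{2\mu},rp^{2\mu}}\equiv 0$ \emph{or} $P^{k,p^{2\mu}m}_{n,rp^{\mu}}\equiv 0$, then $g^{k,p^{2\mu}m}_{np^{2\mu},rp^{2\mu}}(n,rp^{\mu})$, being a non-zero multiple of $\overline{\langle P^{k,p^{2\mu}m}_{n,rp^{\mu}},\,P^{k,p^{2\mu}m}_{np^{2\mu},rp^{2\mu}}\rangle}$, also vanishes, so the second term dies; hence $g^{k,p^{\mu}m}_{p^{\mu}n,p^{\mu}r}(p^{\mu}n,p^{\mu}r)=0$ and $P^{k,p^{\mu}m}_{p^{\mu}n,p^{\mu}r}\equiv 0$. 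This is exactly the contrapositive of the theorem.

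For the displayed identity I would start from the Fourier expansion of Jacobi Poincar\'{e} series recalled in Section~\ref{preliminaries}, writing
\[ g^{k,p^{\mu}m}_{p^{\mu}n,p^{\mu}r}\!\left(p^{\mu}n,p^{\mu}r\right)=(\text{orbit term})+\sum_{c\ge 1} c^{-g/2-1}\,H_{p^{\mu}m,c}\!\left(p^{\mu}n,p^{\mu}r;\,p^{\mu}n,p^{\mu}r\right)\,J_{k^{\p}}\!\left(\tfrac{\pi\,D(p^{\mu}n,p^{\mu}r)}{\det(2p^{\mu}m)\,c}\right), \]
with $H_{M,c}$ the Jacobi--Kloosterman sums. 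The elementary inputs are the scaling relations $D(p^{\mu}n,p^{\mu}r)=p^{\mu(g+1)}D$, $\det(2p^{\mu}m)=p^{\mu g}\det(2m)$, and likewise $D(np^{2\mu},rp^{2\mu})=p^{2\mu(g+1)}D$, $D(n,rp^{\mu})=p^{2\mu g}D$, $\det(2p^{2\mu}m)=p^{2\mu g}\det(2m)$, $D(np^{\mu-1},rp^{\mu-1})=p^{(\mu-1)(g+1)}D$, $\det(2mp^{\mu-1})=p^{(\mu-1)g}\det(2m)$. Consequently the diagonal Bessel argument above equals $\tfrac{\pi p^{\mu}D}{\det(2m)\,c}$; after $c\mapsto pc'$ it equals $\tfrac{\pi p^{\mu-1}D}{\det(2m)\,c'}$, which is exactly the diagonal Bessel argument in $g^{k,mp^{\mu-1}}_{np^{\mu-1},rp^{\mu-1}}(np^{\mu-1},rp^{\mu-1})$; and one checks $\tfrac{\pi\sqrt{D(np^{2\mu},rp^{2\mu})\,D(n,rp^{\mu})}}{\det(2p^{2\mu}m)\,c}=\tfrac{\pi p^{\mu}D}{\det(2m)\,c}$, so it coincides term-by-term with the $(n,rp^{\mu})$-th Bessel argument of $g^{k,p^{2\mu}m}_{np^{2\mu},rp^{2\mu}}$. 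Now I split the $c$-sum according to $v_{p}(c)$ and invoke the Kloosterman-type identities of the paper, which under the hypotheses $p\mid(m,r)$, $p\nmid n$ should give: for $p\nmid c$, $H_{p^{\mu}m,c}(p^{\mu}n,p^{\mu}r;p^{\mu}n,p^{\mu}r)$ is a fixed non-zero multiple of $H_{p^{2\mu}m,c}(np^{2\mu},rp^{2\mu};n,rp^{\mu})$ (which is itself supported on $p\nmid c$); and for $p\mid c$, it is a fixed non-zero multiple of $H_{mp^{\mu-1},c/p}(np^{\mu-1},rp^{\mu-1};np^{\mu-1},rp^{\mu-1})$. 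The $p\mid c$ portion, after $c\mapsto pc'$ and together with the leftover orbit term on the left (the orbit counts of $(p^{\mu}n,p^{\mu}r)$ mod $2p^{\mu}m$ and of $(np^{\mu-1},rp^{\mu-1})$ mod $2mp^{\mu-1}$ coincide), repackages into $A\, g^{k,mp^{\mu-1}}_{np^{\mu-1},rp^{\mu-1}}(np^{\mu-1},rp^{\mu-1})$; the $p\nmid c$ portion repackages into $B\, g^{k,p^{2\mu}m}_{np^{2\mu},rp^{2\mu}}(n,rp^{\mu})$, with no orbit term, since $(n,rp^{\mu})$ is not $\Gamma^{J}$-equivalent to $(np^{2\mu},rp^{2\mu})$ (the lattice $2p^{2\mu}m\,\mbb{Z}^{g}$ cannot carry $rp^{\mu}$ to $rp^{2\mu}$). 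Matching the Bessel index $k^{\p}$, the powers of $c$, and the Gauss-sum and root-of-unity factors then pins down the non-zero constants $A,B$.

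The crux, and the expected main obstacle, is this last reassembly. One must verify that the dichotomy $p\nmid c$ versus $p\mid c$ really does sort the terms into \emph{complete} Fourier expansions of the two target Poincar\'{e} coefficients --- that no unmatched residues survive, that the Jacobi--Kloosterman sums on the two sides truly correspond once the classes of $r$ are tracked through the three distinct index lattices under the scalings by powers of $p$, and that all normalising constants, Bessel indices and exponential factors are genuinely compatible; this is precisely where the hypotheses $p\mid(m,r)$ and $p\nmid n$ enter, since they are what make the index-$p^{\mu}m$ Kloosterman sums degenerate cleanly. The parity condition on $k$ from \thmref{order1+} plays no independent role here: when $2r\equiv 0\pmod{\mbb{Z}^{g}\cdot 2m}$ and $k$ is odd, $P^{k,p^{\mu}m}_{p^{\mu}n,p^{\mu}r}\equiv 0$ and the statement is vacuous, so one may assume the good parity throughout. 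With the identity proved, the contrapositive computation of the first paragraph completes the argument.
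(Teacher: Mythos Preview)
Your strategy is exactly the paper's: establish a linear identity among the three Fourier coefficients and conclude by contraposition, using Petersson duality to say that $g^{k,M}_{n_1,r_1}(n_2,r_2)$ vanishes whenever either Poincar\'{e} series does. The matching of Bessel arguments and of the orbit/$\delta$-terms is carried out correctly, and the final identity you aim for is precisely the one the paper writes as $c^{k,p^{\mu}m }(p^{\mu}n,p^{\mu}r) = \alpha_{1}\,c^{k,p^{2\mu}m }(p^{2\mu}n,p^{2\mu}r;\,n,p^{\mu}r) + \alpha_{2}\,c^{k,p^{\mu-1}m}(p^{\mu-1}n,p^{\mu-1}r)$.

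The one point where your dichotomy is off is the form of the Kloosterman relation for $p\mid c$. You assert that in this case $H_{p^{\mu}m,c}(p^{\mu}n,p^{\mu}r;p^{\mu}n,p^{\mu}r)$ is a constant multiple of $H_{mp^{\mu-1},c/p}(\ldots)$ \emph{alone}, and that $H_{p^{2\mu}m,c}(np^{2\mu},rp^{2\mu};n,rp^{\mu})$ is ``supported on $p\nmid c$''. Neither is true in general. The actual identity (the paper's \lemref{higherkloosterman}, specialised to $\rho=\mu$, $n'=n$, $r'=p^{\mu}r$) is, for $p\mid c$,
\[
H_{p^{\mu}m,c}\bigl(p^{\mu}n,p^{\mu}r;\,p^{\mu}n,p^{\mu}r\bigr)
= H_{p^{2\mu}m,c}\bigl(p^{2\mu}n,p^{2\mu}r;\,n,p^{\mu}r\bigr)
+ p^{-g/2+1}\,H_{mp^{\mu-1},\,c/p}\bigl(p^{\mu-1}n,p^{\mu-1}r;\,p^{\mu-1}n,p^{\mu-1}r\bigr),
\]
while for $p\nmid c$ only the first term survives (this is your equation, and follows directly from the change of variables $y\mapsto p^{\mu}y$). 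Thus the $H_{p^{2\mu}m,c}$ piece appears for \emph{all} $c\ge 1$, not only for $p\nmid c$, and it is exactly this that lets the $c$-sum on the right repackage into the \emph{full} coefficient $g^{k,p^{2\mu}m}_{np^{2\mu},rp^{2\mu}}(n,rp^{\mu})$ without any missing range. With this correction the reassembly you worried about is in fact cleaner than you anticipated: no vanishing claim for $p\mid c$ is needed, and the substitution $c\mapsto pc'$ on the second term yields a complete sum over $c'\ge 1$. Everything else in your outline stands.
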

(Here $p | m$ means $p$ divides every entry of $m$; since $2m$ is a $(g \times g)$ matrix with integer entries and $p$ is odd, this makes sense.)

\begin{rmk}
\begin{enumerate}
\item
In Section~\ref{proofs} we first prove the trivial case where the Poincar\'{e} series $P_{n,r}^{k,m}$ does not vanish when the ratio $ \frac{\pi D}{\det{(2 m)}} \left( \frac{ D}{\det{(2 m)}} = 2n - 2 m^{-1}[\frac{1}{2}r^{t}] \right)$ by which we measure the non-vanishing of Jacobi Poincar\'{e} series, is $O(k) $, but with explicit range of the weight $k$ where this is valid. This follows from \propref{poincarenon-vanish} for arbitrary $g$ and also from \thmref{classicalbasis} in the case $g=1$ (recall that $\dim{ J_{k,m,1}^{cusp} }\sim O \left( \frac{k (m+1)}{12} \right)$ ). \\

\item
\thmref{order1+} therefore improves the trivial case mentioned in the previous remark. However, achieving the ``order of $k^{2 - \epsilon}$ ($\epsilon >0$)'' as in \cite{rankin} in the case of Jacobi Poincar\'{e} series using Rankin's methods seems difficult mainly because of the presence of the factor $(c,D)$ instead of $(c,D)^{\frac{1}{2}}$ in the estimate of Kloosterman sums of degree $g$ (even for small $g$, see section \ref{proofs}). \\

\item
The condition that $k$ be even when $2r \equiv 0 \pmod{\mbb{Z}^{g} \cdot 2m }$ in \thmref{order1+} is necessary, as the $(n,r)$-\textit{th} Poincar\'{e} series vanish when $k$ is odd and $2r  \equiv 0 \pmod{\mbb{Z}^{g} \cdot 2m }$. The restriction $k^{\p} \leq \frac{\pi D} {\det{(2m)}}$ in \thmref{order1+} is natural since we know the result in the complement (see \propref{poincarenon-vanish}). Same is true for the condition $D > \frac {m} {\pi}$ in \thmref{g=1,indepofk}. 

\end{enumerate}
\end{rmk}

\flushleft \x{Acknowledgements.} The author wishes to thank Prof. B. Ramakrishnan for going through the manuscript and for his support and encouragement.

\section{Notations and Preliminaries} \label{preliminaries}

The Jacobi group $\Gamma_{g}^{J}$ operates on $\mathcal{H} \times \mbb{C}^{g}$ in the usual way by 
\[ \left( \left( \begin{matrix}
a & b \\
c & d  \end{matrix}\right),(\lambda,\mu) \right)\circ(\tau,z) = \left(  \frac{a\tau+b}{c\tau+d} , (c\tau+d)^{-1}(z+\lambda \tau+ \mu) \right). \]

Let $k \in \mbb{Z}$, $m$ a symmetric, positive-definite, half-integral $(g \times g)$ matrix. Then we have the action of $\Gamma_{g}^{J}$ on functions $\phi \colon  \mathcal{H} \times \mbb{C}^{g} \rightarrow \mbb{C}$ given by :
\[ \phi|_{k,m}\gamma(\tau,z) := (c\tau+d)^{-k} e\left(-c(c\tau+d)^{-1}m[z+\lambda \tau + \mu] + m[\lambda] + 2\lambda^{t}mz\right)
\phi \left(\gamma \circ (\tau,z) \right). \]
(Here $A[B] = B^{t}AB$ for matrices $A,B$ of appropriate sizes, $B^{t}$ is the transpose of the matrix $B$, $e(z):= e^{2 \pi i z}$ , $\mathcal{H}$ is the upper half plane.)

The vector space of Jacobi cusp forms of weight $k$, index $m$ and degree $g$, denoted by $J_{k,m,g}^{cusp}$ is defined to be the space of holomorphic functions $\phi \colon  \mathcal{H} \times \mbb{C}^{g} \rightarrow \mbb{C}$ satisfying $\phi|_{k,m}\gamma = \phi$ and having a Fourier expansion \[ \phi(\tau,z) = \underset{n \in \mbb{N}, r \in \mbb{Z}^{g} , 4n > m^{-1}[r^{t}]} \sum c_{\phi}(n,r) e(n\tau+rz) \]

For $n \in \mbb{N}$, $r \in \mbb{Z}^{g}$ with $4n > m^{-1}[r^{t}]$, let $P_{n,r}^{k,m}$ be the $(n,r)$-\textit{th} Poincar\'{e} series of weight $k$ and index $m$ (of exponential type) defined for $k > g + 2$ by
\[ P_{n,r}^{k,m}(\tau , z) := \underset{\gamma \in \Gamma_{g,\infty}^{J} \backslash  \Gamma_{g}^{J}}\sum  e(n \tau + rz)|_{k,m} \gamma(\tau,z) \q \q \tau \in \mathcal{H} ,\, z \in \mbb{C}^{g}, \]
where $\Gamma_{g,\infty}^{J} := \left\{ \left( \left( \begin{matrix}
1 & n \\
0 & 1  \end{matrix}\right), (0, \mu) \right) \mid n \in \mbb{Z}, \mu \in \mbb{Z}^{g} \right\}.$

It is well known that $J_{k,m,g}^{cusp}$ is finite dimensional and the family of Poincar\'{e} series $P_{n,r}^{k,m}$ ($n \in \mbb{N}$, $r \in \mbb{Z}^{g}$) form a basis of $J_{k,m,g}^{cusp}$. In \cite[Lemma 1]{kohnen}, S. B$\ddot{\mbox{o}}$cherer and W. Kohnen obtained the Fourier expansion of $P_{n,r}^{k,m} \colon $ 

\begin{prop}\label{poincarefourier}
(1) The function $P_{n,r}^{k,m}$ is in $J_{k,m,g}^{cusp}$. The Fourier expansion of the Poincar\'{e} series is given by

\begin{equation*}
P_{n,r}^{k,m}(\tau ,z) = \underset{n^{\p} \in \mbb{N}, r^{\p} \in \mbb{Z}^{g} , 4n^{\p} > m^{-1}[{r^{\prime}}^{t}]} \sum 
c_{n,r}^{k,m}(n^{\p},r^{\p}) e(n^{\p} \tau + r^{\p}z)  , 
\end{equation*} where

\begin{eqnarray}
\begin{split}
c_{n,r}^{k,m}(n^{\p},r^{\p}) & = \delta_{m}(n,r,n^{\p},r^{\p}) + (-1)^{k}\delta_{m}(n,r,n^{\p},-r^{\p}) +  2 \pi i^{k}\, det(2m)^{-1/2} \cdot (D^{\p}/D)^{k/2 - g/4 - 1/2}\\
& \times \underset{c \geq 1}\sum \left( H_{m,c}(n,r,n^{\p},r^{\p})+ (-1)^{k} H_{m,c}(n,r,n^{\p},-r^{\p}) \right) J_{k-g/2 -1}\left( \frac{2 \pi \sqrt{DD^{\p}}}{det(2m)\cdot c} \right) \\ 
\end{split}
\end{eqnarray}
where $D^{\p}  = \det {\left( \begin{smallmatrix}
2n^{\p} & r^{\p} \\
{r^{\p}}^{t} & 2m  \end{smallmatrix}\right) }$ \q , \q $\delta_{m}(n,r,n^{\p},r^{\p}): = \left\{ \begin{array}{cc} 1 & \mbox{ if } D = D^{\p} , r \equiv r^{\p} \pmod{\mbb{Z}^{g} \cdot 2m } , \\
0 & \mbox{ otherwise } , \end{array} \right. $\\
and \q $H_{m,c}(n,r,n^{\p},r^{\p}) : = c^{-g/2 -1} \, \underset{x(c),y(c)^{*}}\sum e_{c}\left(( m[x]+ rx +n)\bar{y} + n^{\p}y + r^{\p}x \right) e_{2c}(r^{\p}m^{-1}r^{t})$ \\
where in the summation $x$ (resp. $y$) run over a complete set of representatives for $\mbb{Z}^{(g,1)}/c \mbb{Z}^{(g,1)}$ (resp. $(\mbb{Z}/c \mbb{Z})^{*})$, $\bar{y}$ denotes an inverse of \, $y \pmod{c}, \q e_{c}(a) := e^{2 \pi i a/c} \, (a \in \mbb{Z}) ,$ and $J_{k - g/2 -1}$ denotes the Bessel function of order $k - g/2 -1$. 

(2) $\langle \phi , P_{n,r}^{k,m} \rangle  = \lambda_{k,m,D}\,  c_{\phi}(n,r)$, \,  where $c_{\phi}(n,r)$  denotes the $(n,r)$-\textit{th} Fourier coefficient of $\phi$ and \[ \lambda_{k,m,D} = 2^{(g-1)(k-g/2-1)-g} \cdot \Gamma(k-g/2-1) \cdot \pi^{-k+g/2+1} \cdot (det \,m)^{k-(g+3)/2} \cdot D^{-k+g/2+1}\]   (here $ \langle \, , \, \rangle $ is the Petersson inner product on $J_{k,m,g}^{cusp}$ ).
\end{prop}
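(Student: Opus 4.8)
The plan is to establish both assertions by the classical unfolding method, adapted to the matrix index $m$. Throughout write $e_{n,r}(\tau,z):=e(n\tau+rz)$ for the seed of the Poincar\'e series, so that $P_{n,r}^{k,m}=\sum_{\gamma\in\Gamma_{g,\infty}^{J}\backslash\Gamma_{g}^{J}} e_{n,r}|_{k,m}\gamma$. The first task is to fix an explicit set of coset representatives for $\Gamma_{g,\infty}^{J}\backslash\Gamma_{g}^{J}$. Using the group law of the semidirect product $SL(2,\mathbb{Z})\ltimes(\mathbb{Z}^{g}\times\mathbb{Z}^{g})$ together with the shape of $\Gamma_{g,\infty}^{J}$, left multiplication by $\bigl((\begin{smallmatrix}1&t\\0&1\end{smallmatrix}),(0,\nu)\bigr)$ fixes the bottom row $(c,d)$ of the $SL_2$-part and lets one normalize $\mu=0$; a complete set of representatives is then $\bigl((\begin{smallmatrix}a&b\\c&d\end{smallmatrix}),(\lambda,0)\bigr)$ with $(c,d)$ running over $\Gamma_{\infty}\backslash SL(2,\mathbb{Z})$ and $\lambda\in\mathbb{Z}^{g}$ free. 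I would organize the resulting sum by the value of $c\ge 0$.

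For part (1) I would separate the contribution of $c=0$ from that of $c\ge 1$. The $c=0$ terms come from the identity coset and from $-I$; summing $e_{n,r}|_{k,m}$ over the $\lambda$-translates and invoking the quasi-periodicity built into $|_{k,m}$ reproduces the diagonal contribution $\delta_m(n,r,n',r')+(-1)^k\delta_m(n,r,n',-r')$, the factor $(-1)^k$ and the sign change $r'\mapsto -r'$ coming from the $-I$ term, while the congruence $r\equiv r'\pmod{\mathbb{Z}^g\cdot 2m}$ and $D=D'$ are forced by matching exponentials. For each fixed $c\ge 1$ I would fix $d$ modulo $c$ coprime to $c$, sum the $|_{k,m}\gamma$ terms over $\lambda\in\mathbb{Z}^{g}$, and extract the $(n',r')$-th Fourier coefficient by integrating against $e(-n'\tau-r'z)$. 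Doing the $z$-integral first, the $\lambda$-summation together with a $g$-dimensional Poisson/theta transformation turns it into a Gaussian integral, yielding the factor $\det(2m)^{-1/2}$ and the phase $e_{2c}(r'm^{-1}r^{t})$, while the residual finite sum over $x\in\mathbb{Z}^{g}\pmod c$ and $y\in(\mathbb{Z}/c\mathbb{Z})^{*}$ of the arising exponential phases is exactly the Kloosterman-type sum $H_{m,c}(n,r,n',r')$. Summing the contributions of $d$ in a fixed residue class modulo $c$ extends the remaining $\mathrm{Re}\,\tau$-integral to the whole real line, where the standard contour formula for $\int (c\tau+d)^{-k}e(\cdots)\,d\tau$ produces $J_{k-g/2-1}$ with the power $(D'/D)^{k/2-g/4-1/2}$ and argument $2\pi\sqrt{DD'}/(\det(2m)\,c)$. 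Collecting the two parts gives the asserted expansion, and the resulting Fourier support ($4n'>m^{-1}[{r'}^{t}]$) together with holomorphy certifies $P_{n,r}^{k,m}\in J_{k,m,g}^{cusp}$.

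For part (2) I would apply the Petersson principle. Writing $\langle\phi,P_{n,r}^{k,m}\rangle$ as an integral over $\Gamma_{g}^{J}\backslash(\mathcal{H}\times\mathbb{C}^{g})$ and unfolding $\overline{P_{n,r}^{k,m}}$ against the $\Gamma_{g}^{J}$-invariant integrand collapses the domain to a fundamental domain for $\Gamma_{g,\infty}^{J}$, leaving $\int\phi\,\overline{e_{n,r}}$ times the invariant weight $(\mathrm{Im}\,\tau)^{k}\exp(-4\pi(\mathrm{Im}\,\tau)^{-1}m[\mathrm{Im}\,z])$. Inserting the Fourier expansion of $\phi$ and using orthogonality of the exponentials in the $\mathrm{Re}\,\tau$ and $z$ variables isolates the single coefficient $c_{\phi}(n,r)$. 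The surviving integral factors into a Gaussian integral over the $z$-fiber, contributing the powers of $\det m$ and $D$, and a one-variable integral of the form $\int_{0}^{\infty}y^{k-\cdots}e^{-4\pi(\cdots)y}\,dy$ in $y=\mathrm{Im}\,\tau$, which evaluates to $\Gamma(k-g/2-1)$ times the appropriate power of $\pi$; assembling these constants gives exactly $\lambda_{k,m,D}$.

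The step I expect to be the main obstacle is the $z$-integration in the $c\ge 1$ part of (1). Because $m$ is a matrix rather than a scalar, the relevant theta/Gauss-sum evaluation is genuinely $g$-dimensional: one must complete the square in $z$ to split off the Gaussian, track the determinant factor $\det(2m)^{-1/2}$ correctly, and produce the cross-term phase $e_{2c}(r'm^{-1}r^{t})$ with the right normalization and sign, all while respecting the half-integrality of $m$. Keeping these automorphy factors consistent and matching them against the one-variable Bessel integral, so that the exponents $k/2-g/4-1/2$ and $k-g/2-1$ emerge correctly, is where the bookkeeping is most delicate.
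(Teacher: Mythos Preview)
The paper does not prove this proposition at all: it is quoted verbatim as a known result, attributed to B\"ocherer and Kohnen \cite[Lemma~1]{kohnen}, and used as a black box throughout. So there is no ``paper's own proof'' to compare against.

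That said, your proposal is exactly the standard unfolding argument by which such formulas are established, and it matches the method of \cite{kohnen}. The coset description, the separation of the $c=0$ and $c\ge 1$ contributions, the $g$-dimensional Poisson/theta step producing $\det(2m)^{-1/2}$ and the cross phase, the Lipschitz-type integral giving the Bessel function, and the Rankin--Selberg unfolding for part~(2) are all correct in outline and in the expected order. Your identification of the delicate step --- completing the square in the $z$-variable with a matrix index $m$ and tracking the half-integrality --- is accurate; this is precisely where the computation in \cite{kohnen} requires care, and where the shift from the scalar-index case (as in Eichler--Zagier) to the matrix-index case is nontrivial. If you want to carry this out in full, the reference \cite{kohnen} contains the details; for the purposes of this paper, citing that result is all that is done.
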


From \propref{poincarefourier}, we conclude that the Poincar\'{e} series $P_{n,r}^{k,m}$ is non-zero if and only if it's \textit{(n,r)-th} Fourier coefficient $c_{n,r}^{k,m}$ is positive. So, it is enough to prove $c_{n,r}^{k,m}$ is non-zero. For convenience of notation we will drop the $(k,m)$ in the calculations.

\begin{lem}\label{trivialweight}
The Poincar\'{e} series $ P_{n,r}^{k,m}$ vanishes if $k$ is odd and $2r \equiv 0 \pmod{\mbb{Z}^{g} \cdot 2m }$.
\end{lem}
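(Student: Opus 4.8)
The plan is to exploit the explicit Fourier expansion in \propref{poincarefourier}(1) together with the symmetry properties of the data $\delta_m$ and $H_{m,c}$ under the sign change $r' \mapsto -r'$. Since a Jacobi form vanishes identically iff all its Fourier coefficients vanish, and since by \propref{poincarefourier}(2) the $(n,r)$-th coefficient controls the Petersson norm, it in fact suffices to examine the coefficients $c_{n,r}^{k,m}(n',r')$ for general $(n',r')$ (or even just $(n',r')=(n,r)$, but the cleanest argument inspects the whole expansion). Each such coefficient is a sum of three contributions: the two ``diagonal'' Kronecker-delta terms $\delta_m(n,r,n',r') + (-1)^k \delta_m(n,r,n',-r')$, and the Bessel/Kloosterman series whose $c$-th term carries the factor $H_{m,c}(n,r,n',r') + (-1)^k H_{m,c}(n,r,n',-r')$.

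First I would set $k$ odd and $2r \equiv 0 \pmod{\mbb{Z}^g \cdot 2m}$, so $(-1)^k = -1$, and show each of the three contributions vanishes. For the delta terms: $\delta_m(n,r,n',r') = 1$ requires $r \equiv r' \pmod{\mbb{Z}^g \cdot 2m}$, while $\delta_m(n,r,n',-r')=1$ requires $r \equiv -r' \pmod{\mbb{Z}^g\cdot 2m}$; but the hypothesis $2r \equiv 0$ gives $r \equiv -r \pmod{\mbb{Z}^g \cdot 2m}$, hence $r' \equiv r \equiv -r' \pmod{\mbb{Z}^g\cdot 2m}$, so the two deltas are equal and, with $(-1)^k=-1$, cancel. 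For the Kloosterman part I would show $H_{m,c}(n,r,n',-r') = H_{m,c}(n,r,n',r')$ for every $c \geq 1$ under the congruence hypothesis, so that again the bracket $H_{m,c} + (-1)^k H_{m,c}$ vanishes termwise; since the Bessel factor $J_{k-g/2-1}$ is independent of the sign of $r'$, the entire series drops out. Then $c_{n,r}^{k,m}(n',r') = 0$ for all admissible $(n',r')$, hence $P_{n,r}^{k,m} \equiv 0$.

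The only real computation is the claimed invariance $H_{m,c}(n,r,n',-r') = H_{m,c}(n,r,n',r')$, and this is the step I expect to be the main (though still routine) obstacle, because the definition of $H_{m,c}$ mixes an inner sum over $x \pmod c$ with the factor $e_{2c}(r' m^{-1} r^t)$ that changes sign under $r' \mapsto -r'$. The idea is: in $H_{m,c}(n,r,n',-r')$ make the substitution $x \mapsto -x$ in the sum over $\mbb{Z}^{(g,1)}/c\mbb{Z}^{(g,1)}$ (a bijection), which turns $m[x]+rx+n$ into $m[x]-rx+n$ and $r'x$ into $r'x$ while leaving $n'y$ alone; the hypothesis $2r \equiv 0 \pmod{\mbb{Z}^g\cdot 2m}$ means $r = m s$ for some $s \in \mbb{Z}^g$ with... more precisely $2r \in \mbb{Z}^g \cdot 2m$, so $r^t m^{-1} \in \mbb{Z}^g$ and one checks that the phase shift $e_c\big((-2rx)\bar y\big)$ together with the sign flip in $e_{2c}(r'm^{-1}r^t)$ can be absorbed by a further shift $\bar y \mapsto \bar y + (\text{integer})$, equivalently $y \mapsto (\text{unit})\cdot y$, using that $r^t m^{-1} r \in \mbb{Z}$ and $r^t m^{-1} \in \mbb{Z}^g$; after reindexing the sum is literally the expression for $H_{m,c}(n,r,n',r')$. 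I would carry out this reindexing carefully, tracking the half-integrality of $m$ and the exact meaning of ``$\pmod{\mbb{Z}^g\cdot 2m}$'' to be sure the $e_{2c}$ factor's sign change is exactly compensated. Once this identity is in hand the lemma follows immediately from the expansion, with no analysis required.
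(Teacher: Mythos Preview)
Your route is genuinely different from the paper's. The paper does not touch the explicit Fourier expansion at all: it observes that for \emph{any} Jacobi form $\phi$ of odd weight the transformation law under $-I\in SL(2,\mbb{Z})$ gives $c_\phi(n,-r)=-c_\phi(n,r)$, while the $(\lambda,0)$–transformations force $c_\phi(n,r)$ to depend only on $D$ and on $r \pmod{\mbb{Z}^g\cdot 2m}$, so $c_\phi(n,r)=c_\phi(n,-r)$ once $r\equiv -r$; hence $c_\phi(n,r)=0$. Applied to $\phi=P_{n,r}^{k,m}$ and combined with \propref{poincarefourier}(2) (equivalently, with the remark just preceding \lemref{trivialweight}), this yields $\lVert P_{n,r}^{k,m}\rVert^{2}=0$. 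That argument is shorter and explains conceptually why the vanishing occurs; your approach, by contrast, exhibits the cancellation termwise in the explicit expansion and avoids the Petersson inner product entirely.

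Your handling of the $\delta_m$–terms is correct. The gap is in the $H_{m,c}$–step. The manoeuvre you sketch, ``absorb the phase $e_c\big((-2rx)\bar y\big)$ and the sign flip of $e_{2c}(r'm^{-1}r^t)$ by a shift $\bar y\mapsto \bar y+(\text{integer})$, equivalently $y\mapsto(\text{unit})\cdot y$'', does not work: an additive translation of $\bar y$ is not a reparametrisation of the sum over $y\in(\mbb{Z}/c\mbb{Z})^{*}$ (inversion is multiplicative), and a multiplicative change $y\mapsto uy$ rescales the entire factor $(m[x]+rx+n)\bar y$, which is $x$–dependent and cannot cancel an $x$–dependent phase. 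The identity $H_{m,c}(n,r,n',-r')=H_{m,c}(n,r,n',r')$ \emph{is} true under the hypothesis, but the correct substitution lives entirely in the $x$–variable. Write $\lambda:=m^{-1}r^{t}\in\mbb{Z}^{(g,1)}$ (this is precisely the content of $2r\equiv 0\pmod{\mbb{Z}^g\cdot 2m}$) and send $x\mapsto -x-\lambda$. A one-line check gives
\[
m[-x-\lambda]+r(-x-\lambda)+n \;=\; m[x]+rx+n,
\]
while $r'(-x-\lambda)=-r'x-r'm^{-1}r^{t}$; the extra phase $e_c(-r'm^{-1}r^{t})$ then combines with the outer factor $e_{2c}(r'm^{-1}r^{t})$ to produce exactly $e_{2c}(-r'm^{-1}r^{t})$, with no change of $y$ needed. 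With this correction your argument goes through cleanly.
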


\begin{proof}
In fact the \textit{(n,r)-th} coefficient $c(n,r)$ of a general Jacobi form of degree $g$ is zero if $k$ is odd when $2r \equiv 0 \pmod{\mbb{Z}^{g} \cdot 2m }$. This is an easy consequence of the transformation property of Jacobi forms. See for instance \cite{zagier} for $g=1$.
\end{proof}

From the Fourier expansion of $P_{n,r}^{k,m}$ we see that in order to prove that it is non-zero, it is enough to prove $|S(n,r)| < \frac{1}{2\pi}$ (noting that $2m$ is a positive-definite matrix with integer entries, hence $ \det{(2m)} \geq 1$), where 
\begin{equation} S(n,r):= \det{(2m)} ^{-1/2} \, \underset{c \geq 1}\sum \left( H_{m,c}(n,r,n,r)+ (-1)^{k} H_{m,c}(n,r,n,-r) \right) J_{k-g/2 -1}\left( \frac{2 \pi D}{det(2m)\cdot c} \right) 
\end{equation}

We will need the following estimates. (See \cite{bessel},\cite{watson} and \cite{kohnen} respectively for details):
\begin{eqnarray}
&\mbox{(i)} \q & |J_{\nu}(x)| \leq \, \min{ \left\{ 1, \frac{1}{\Gamma(\nu +1)} \left(\frac{x}{2}\right)^{\nu}  \right\} } \mbox{ for } x > 0 \mbox{ and } \nu \geq 2. \label{besselestimate}\\
&\mbox{(ii)} \q & |H_{m,c}(n,r,n,\pm r)| \leq \, 2^{\omega(c)} c^{g/2 -1} (D,c), \label{kloostermanestimate}\\
& \mbox{ where }  &\omega(c) \mbox{ is the number of distinct prime divisors of } c,\, (D,c) = gcd(D,c).\nonumber 
\end{eqnarray}

\section{Proof of \thmref{order1+}} \label{proofs}

\subsection{} In this section we first obtain the following proposition which follows easily from trivial estimates of Bessel functions.
\begin{prop}\label{poincarenon-vanish}
 
\begin{enumerate}
\item
Let $k$ be even when  $2r \equiv 0 \pmod{\mbb{Z}^{g} \cdot 2m }$. Then there exists an integer $k_{0}$ such that the \textit{(n,r)-th} Poincar\'{e} series $P_{n,r}^{k,m}$ does not vanish identically  \[ \mbox{ for } \, k  \geq k_{0}\,   \mbox{ and } (n,r) \in \mbb{N} \times \mbb{Z}^{g}  \mbox{ with }  D \leq \frac{k^{\p}}{ \pi e} \cdot \det{(2m)} \] If $k > g+3$, then one can take $k_{0} = max \left( g+4 , [\frac{g}{2}] + 69 \right).$ 

\item
For all \, $ D < \frac{1}{\pi} \, \det{(2m)} $ \, the Poincar\'{e} series $P_{n,r}^{k,m}$ does not vanish identically, whenever the condition is non-void and $k > g+3$ if $g \geq 2$ and $ k > 5$ if $g=1$. 
\end{enumerate}
\end{prop}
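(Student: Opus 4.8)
The plan is to bound $|S(n,r)|$ directly using the two estimates \eqref{besselestimate} and \eqref{kloostermanestimate}, and to show it is strictly less than $\frac{1}{2\pi}$ under the stated hypotheses. Recall that, by \propref{poincarefourier} and the discussion following it, non-vanishing of $P_{n,r}^{k,m}$ is equivalent to positivity of its $(n,r)$-th Fourier coefficient, which in turn is guaranteed once $|S(n,r)| < \frac{1}{2\pi}$ because the two $\delta_m$-terms contribute $1$ (here using that $k$ is even in the degenerate case $2r\equiv 0$, so the two delta terms add rather than cancel).

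For part (1), I would plug $\nu = k' = k - g/2 - 1$ and $x = \frac{2\pi D}{\det(2m)\,c}$ into the Bessel bound $|J_\nu(x)| \le \frac{1}{\Gamma(\nu+1)}\bigl(\tfrac{x}{2}\bigr)^\nu$, and combine with $|H_{m,c}(n,r,n,\pm r)| \le 2^{\omega(c)} c^{g/2-1}(D,c) \le 2^{\omega(c)} c^{g/2-1}\cdot c = 2^{\omega(c)} c^{g/2}$, using the crude bound $(D,c)\le c$. This turns the $c$-sum into $\det(2m)^{-1/2}\cdot 2\cdot \frac{1}{\Gamma(k'+1)}\bigl(\tfrac{\pi D}{\det(2m)}\bigr)^{k'} \sum_{c\ge 1} 2^{\omega(c)} c^{g/2} c^{-k'} c^{-g/2}$, where the extra $c^{-g/2}$ comes from the $x^\nu$ having a $c^{-\nu}$ and the leftover powers; after simplification the sum is $\sum_{c\ge 1} 2^{\omega(c)} c^{-k'}$, which is $\le \sum_c d(c) c^{-k'} = \zeta(k')^2$, a bounded quantity for $k'$ large (say $\zeta(k')^2 \le \zeta(2)^2$ once $k'\ge 2$, or closer to $1$ for $k'$ genuinely large). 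Now if $\frac{\pi D}{\det(2m)} \le \frac{k'}{e}$, then $\bigl(\tfrac{\pi D}{\det(2m)}\bigr)^{k'} / \Gamma(k'+1) \le (k'/e)^{k'}/\Gamma(k'+1)$, and by Stirling $\Gamma(k'+1) \gtrsim (k'/e)^{k'}\sqrt{k'}$, so this ratio is $O(1/\sqrt{k'})\to 0$. Hence for $k'$ (equivalently $k$) large enough the whole expression drops below $\frac{1}{2\pi}$; tracking the constants in Stirling and in $\zeta(k')^2$ yields the explicit $k_0 = \max(g+4,\ [g/2]+69)$. The constant $69$ is exactly what is needed to absorb $2\pi$, the factor $2$, and $\zeta(k')^2$ against the $\sqrt{k'}$ gain from Stirling.

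For part (2), the hypothesis is $\frac{\pi D}{\det(2m)} < 1$, so $x = \frac{2\pi D}{\det(2m)\,c} < \frac{2}{c} \le 2$ for all $c\ge 1$; in particular $x/2 < 1/c \le 1$. Here I would use the first Bessel bound $|J_\nu(x)|\le 1$ for $c=1$ if convenient, but more efficiently use $|J_\nu(x)| \le \frac{1}{\Gamma(\nu+1)}(x/2)^\nu \le \frac{1}{\Gamma(\nu+1)} c^{-\nu}$ for every $c$ (since $x/2 < 1/c$), again with $|H_{m,c}|\le 2^{\omega(c)}c^{g/2}$. Then $|S(n,r)| \le \det(2m)^{-1/2}\cdot \frac{2}{\Gamma(k'+1)} \sum_{c\ge 1} 2^{\omega(c)} c^{g/2} c^{-k'} c^{-g/2} = \frac{2}{\Gamma(k'+1)}\sum_{c\ge 1}2^{\omega(c)}c^{-k'} \le \frac{2\zeta(k')^2}{\Gamma(k'+1)}$, which is $<\frac{1}{2\pi}$ as soon as $\Gamma(k'+1) > 4\pi\zeta(k')^2$; since $k' = k - g/2 - 1$, this holds for $k > g+3$ when $g\ge 2$ and for $k>5$ when $g=1$ (in the latter case $g=1$ gives more room because $k'$ is then $k - 3/2$, and one checks the inequality numerically at $k=6$). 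Note the condition is "non-void" only when some $(n,r)$ with $4n > m^{-1}[r^t]$ and $D < \det(2m)/\pi$ exists, so there is nothing to prove otherwise.

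The main obstacle is purely bookkeeping: getting the explicit constant $k_0$ in part (1) requires a careful Stirling estimate for $\Gamma(k'+1)$ together with a uniform bound on $\zeta(k')^2$ valid down to the smallest admissible $k'$, and one must check that the crude replacement $(D,c)\le c$ does not cost too much — it does not, because the decisive saving comes from $\Gamma(k'+1)$ beating $(k'/e)^{k'}$. The analytic content is entirely in the elementary inequality $(\pi D/\det(2m))^{k'} \le \Gamma(k'+1)\cdot(\text{small})$ under $\pi D/\det(2m) \le k'/(\pi e)$ — the factor $\pi e$ rather than $e$ in the statement is precisely the margin absorbing the $2\pi$ and the Dirichlet-series constant.
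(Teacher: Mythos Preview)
Your approach is the paper's approach, but you have a bookkeeping slip that matters for the explicit constants. In the $c$-sum you write
\[
\sum_{c\ge 1} 2^{\omega(c)}\, c^{g/2}\, c^{-k'}\, c^{-g/2},
\]
and justify the ``extra $c^{-g/2}$'' as a leftover from the Bessel factor. There is no such leftover: the Bessel bound contributes exactly $c^{-k'}$ and the Kloosterman bound (with $(D,c)\le c$) contributes $c^{g/2}$, so the correct exponent is $g/2-k' = -(k-g-1)$. The paper records this as
\[
|S(n,r)| \le \frac{2}{\Gamma(k'+1)}\left(\frac{\pi D}{\det(2m)}\right)^{k'}\sum_{c\ge 1}\frac{2^{\omega(c)}}{c^{\,k-g-1}},
\]
and bounds the series by $\zeta(2)$ via $2^{\omega(c)}\le c$ once $k-g-1\ge 3$, i.e.\ $k\ge g+4$. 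Your exponent $k'$ is larger than $k-g-1$ by $g/2$, so your series converges under weaker hypotheses; this is why your argument never explains where the threshold $k>g+3$ (and hence the $g+4$ in $k_0$) comes from. Fix the exponent and everything lines up: the Stirling bound on $(k'/e)^{k'}/\Gamma(k'+1)$ handles part~(1), and for part~(2) the factor $(\pi D/\det(2m))^{k'}<1$ reduces the task to $\Gamma(k'+1)>4\pi\cdot(\text{series bound})$, which one checks numerically at the stated thresholds.

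A minor point: in your final paragraph you write ``under $\pi D/\det(2m)\le k'/(\pi e)$'', but earlier you had the correct translation $\pi D/\det(2m)\le k'/e$ of the hypothesis $D\le \frac{k'}{\pi e}\det(2m)$; the extra $\pi$ in the denominator of $D$ is absorbed when you divide through by $\det(2m)$, so there is no additional $\pi$-margin to speak of.
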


\begin{lem} \label{non-void}
The condition in \propref{poincarenon-vanish}$(2)$ is non-void for $n < \frac{1}{6} + \frac{(2m-3)^2}{144m}$ when $g=1$.

\begin{proof} Suppose that $D <  \frac{1}{\pi}(2m) < \frac{1}{3} (2m).$ We have   $2m(2n - \frac{1}{3}) < r^{2} < 4mn.$  Noticing that there is a square in the interval $[x,y]$ , ($x,y \in \mbb{R}^{+}$)  when \, $2 \sqrt{x} + 1 < y -x ,$ \, we need to have,  \[ 2\sqrt{2m(2n-\frac{1}{3})} + 1 < \frac{2m}{3} , \, \mbox{ or } \, n < \frac{1}{6} + \frac{(2m-3)^2}{144m}.\] So, in the case $g=1$ , the Poincar\'{e} series $P_{k,m}^{n,r}$  does not vanish identically when $k>4$ and $n$ satisfies the condition of the lemma.
\end{proof}
\end{lem}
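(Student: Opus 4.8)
For $g=1$ the index $m$ is a scalar with $2m \in \mbb{Z}_{>0}$, so $\det(2m)=2m$, and for a pair $(n,r)\in\mbb{N}\times\mbb{Z}$ the relevant discriminant is $D=4mn-r^{2}$; the Poincar\'{e} series $P_{n,r}^{k,m}$ is defined exactly when $D>0$ (i.e.\ $4n>m^{-1}r^{2}$). Hence the condition of \propref{poincarenon-vanish}$(2)$, namely $D<\frac1\pi\det(2m)$, reads $0<4mn-r^{2}<\frac{2m}{\pi}$. The plan is to translate ``non-void'' into an elementary statement: for the given $n$ the condition holds for some admissible $r$ if and only if the open interval $\left(4mn-\frac{2m}{\pi},\,4mn\right)$ contains a perfect square, that is, some $r^{2}$ sitting just below $4mn$.

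Next I would invoke the standard square-gap fact quoted in the statement: an interval $[x,y]$ with $x>0$ contains a perfect square whenever $y-x>2\sqrt{x}+1$ (take $r=\lfloor\sqrt{x}\rfloor+1$, so that $x<r^{2}\le x+2\sqrt{x}+1\le y$). To produce a clean closed form I would use $\frac1\pi<\frac13$ and apply this estimate to the interval $\left[\,2m\!\left(2n-\tfrac13\right),\,4mn\,\right]$, whose length is $\frac{2m}{3}$. The hypothesis of the square-gap estimate then becomes
\[ 2\sqrt{2m\!\left(2n-\tfrac13\right)}+1 < \frac{2m}{3}. \]

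Finally I would solve this inequality for $n$. Assuming $\frac{2m}{3}-1>0$ (automatic for the indices of interest), squaring yields $8m\!\left(2n-\tfrac13\right)<\left(\tfrac{2m}{3}-1\right)^{2}$, and a direct rearrangement isolates $n<\frac16+\frac{(2m-3)^{2}}{144m}$, which is exactly the claimed range. For such $n$ the square-gap estimate hands back an explicit integer $r$ with $4mn-r^{2}$ in the required window, so the condition of \propref{poincarenon-vanish}$(2)$ is satisfied; combining with that proposition (valid for $k>5$ when $g=1$) gives $P_{n,r}^{k,m}\not\equiv0$.

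The computation is entirely elementary, so the only point needing care---and the main (minor) obstacle---is that the square-gap bound is merely a \emph{sufficient} condition and is applied to the slightly wider $\frac{2m}{3}$-window used to get the clean constant rather than to the genuine $\frac{2m}{\pi}$-window; one should check that the integer $r$ it produces can be taken to be the largest integer below $\sqrt{4mn}$, so that $D=4mn-r^{2}<2\sqrt{4mn}+1$ stays inside the true window in the stated $n$-range, and that $D>0$ is kept by excluding the borderline case where $4mn$ is itself a perfect square (handled by the strict inequalities). Beyond this bookkeeping the estimate tightens only as $n$ approaches the stated bound, and no further input is required.
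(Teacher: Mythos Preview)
Your proposal is correct and follows essentially the same route as the paper: translate the condition into the existence of a square in $(4mn-\tfrac{2m}{\pi},4mn)$, relax $\tfrac{1}{\pi}$ to $\tfrac{1}{3}$, apply the elementary gap estimate $2\sqrt{x}+1<y-x$, and solve the resulting inequality for $n$. If anything you are slightly more careful than the paper, which applies the square-gap lemma directly to the widened $\tfrac{2m}{3}$-window without commenting on the issue you flag in your last paragraph.
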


\begin{proof}[\x{Proof of \propref{poincarenon-vanish}}]
In a straightforward manner, using estimates~(\ref{besselestimate}) and~(\ref{kloostermanestimate}), we get  
$ |S(n,r)| \leq  \frac{2}{\Gamma(k^{\p} +1)} \left(\frac{S}{2} \right)^{k^{\p}} \underset{c} \sum \frac{2^{\omega(c)}}{c^{k-g-1}}$, where $S := \frac{2 \pi D}{det(2m)}$. Using this, and \corref{stirling}, the proof follows. We omit the details.
\end{proof}

\begin{cor}\label{stirling}
If $k > g+3$, then one can take $k_{0} = max \left( g+4 , [\frac{g}{2}] + 69 \right)$ in \propref{poincarenon-vanish}.\\  (  \mbox{[.]} denotes the greatest integer function)  
\end{cor}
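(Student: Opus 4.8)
The plan is to make the estimate from the proof of \propref{poincarenon-vanish} fully explicit and then simply solve for $k$. Recall from that proof that for every admissible $(n,r)$ and every $k>g+3$,
\[ |S(n,r)| \le \frac{2}{\Gamma(k^{\p}+1)}\left(\frac{S}{2}\right)^{k^{\p}}\sum_{c\ge 1}\frac{2^{\omega(c)}}{c^{\,k-g-1}},\qquad S:=\frac{2\pi D}{\det(2m)}. \]
The Dirichlet series is handled by an Euler product: since $2^{\omega(c)}$ is multiplicative with value $2$ at every prime power, $\sum_{c\ge1}2^{\omega(c)}c^{-s}=\zeta(s)^2/\zeta(2s)$ for $\Re(s)>1$. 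As $k,g\in\mbb{Z}$, the hypothesis $k>g+3$ forces $k-g-1\ge 3$, so this factor is bounded, uniformly in $k,g$, by $\zeta(3)^2/\zeta(6)<\tfrac32$. This is exactly where the bound $k>g+3$ (equivalently, the term $g+4$ in $k_0$) enters: it guarantees absolute convergence and the clean constant.

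Next one feeds in the range of $D$. The hypothesis $D\le\frac{k^{\p}}{\pi e}\det(2m)$ says precisely $S/2\le k^{\p}/e$, so
\[ |S(n,r)| \le \frac{3}{\Gamma(k^{\p}+1)}\left(\frac{k^{\p}}{e}\right)^{k^{\p}}. \]
Now apply the explicit Stirling lower bound $\Gamma(x+1)\ge\sqrt{2\pi x}\,(x/e)^x$, valid for all $x>0$ from the positivity of the Binet remainder; one works with $\Gamma$ rather than a factorial because $k^{\p}=k-g/2-1$ need not be an integer. This collapses the estimate to $|S(n,r)|\le 3/\sqrt{2\pi k^{\p}}$, which is $<\frac1{2\pi}$ as soon as $k^{\p}>18\pi$ (so certainly for $k^{\p}\ge 57$). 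Since $k\ge[g/2]+69$ gives $k^{\p}=k-g/2-1\ge 68-(g/2-[g/2])\ge 67.5$, the condition $k\ge[g/2]+69$ suffices; together with the requirement $k>g+3$ this yields $k_0=\max\bigl(g+4,\,[g/2]+69\bigr)$. Finally, by the reduction recalled in Section~\ref{preliminaries} (using \propref{poincarefourier} and the parity hypothesis of \propref{poincarenon-vanish}, cf.\ \lemref{trivialweight}), $|S(n,r)|<\frac1{2\pi}$ forces the $(n,r)$-th Fourier coefficient of $P_{n,r}^{k,m}$ to be positive, so $P_{n,r}^{k,m}\not\equiv 0$.

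There is no genuine obstacle here; the corollary is a bookkeeping complement to \propref{poincarenon-vanish} that supplies the details omitted in its proof. The only points requiring a little care are the Euler-product evaluation of $\sum 2^{\omega(c)}c^{-s}$ together with a uniform numerical bound for it, and the use of a Stirling \emph{lower} bound for $\Gamma$ valid at non-integral argument. The constant $69$ is deliberately not optimized — the computation above in fact delivers a visibly smaller admissible value — but the extra slack costs nothing and keeps the statement clean.
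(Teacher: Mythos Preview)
Your proof is correct and follows essentially the paper's approach: bound the Dirichlet series by an absolute constant and then apply Stirling's lower bound to the remaining ratio $(S/2)^{k'}/\Gamma(k'+1)$; the only difference is that you evaluate $\sum_c 2^{\omega(c)}c^{-s}=\zeta(s)^2/\zeta(2s)$ via the Euler product, whereas the paper uses the cruder $2^{\omega(c)}\le c$ to reduce to $\zeta(2)=\pi^2/6$. Your remark that one must use the $\Gamma$-function form of Stirling because $k'=k-g/2-1$ may be a half-integer is a point the paper's quoted factorial version glosses over.
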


\begin{proof}
Examining the proof of \propref{poincarenon-vanish}, we see that when $k \geq g+4$ the series 
\[ \underset{c} \sum \frac{2^{\omega(c)}}{c^{k-g-1}} < \zeta(2) = \frac{\pi^{2}}{6}, \]
using the trivial bound $2^{\omega(c)} \leq c$. The rest follows by using Stirling's formula :
\[ n! = \sqrt{2 \pi n} \left( \frac{n}{e} \right)^{n} e^{\lambda_{n}}, \, \mbox{ where } \, \frac{1}{12n +1} < \lambda_{n} < \frac{1}{12 n} , \mbox{ for } n \in \mbb{N}. \]
\end{proof}

\subsection{Poincar\'{e} series for small weights} 
For $ \mathrm{Re}(s) > \frac{1}{2}(\frac{g}{2}-k+2)$, using the 'Hecke trick', the Jacobi Poincar\'{e} series is defined as in \cite{bringmann2}
\[ P_{n,r;s}^{k,m}(\tau,z) = \underset{\gamma \in \Gamma_{g,\infty}^{J} \backslash  \Gamma_{g}^{J}}\sum  \left( \frac{v}{|c\tau+d|^{2}}\right)^{s}e(n \tau + rz)|_{k,m} \gamma(\tau,z) \q  \mbox{ where } \tau=u+iv \in \mathcal{H} ,\, z \in \mbb{C}^{g}, \, s \in \mbb{C}. \]
Then for $k>\frac{g}{2}+2$, \, $P_{n,r;0}^{k,m} \in J_{k,m}^{cusp}$ and has the same Fourier properties as $P_{n,r}^{k,m}$. We also consider conditions on it's non-vanishing in the following Proposition.

\begin{prop}\label{smallk}
Under the hypotheses of \propref{poincarenon-vanish} there exists an integer $C(m)$, depending on $m$ such that the Poincar\'{e} series $P_{n,r;0}^{k,m}$ doesnot vanish identically \[ \forall  k \in \left[ \max{ \left\{ C(m),\frac{g+7}{2}\right\}} , \infty \right) \mbox{ and } (n,r) \in \mbb{N} \times \mbb{Z}^{g}  \mbox{ with }  D \leq \frac{k^{\p}}{ \pi e} \cdot \det{(2m)} \] 
\end{prop}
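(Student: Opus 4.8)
The plan is to reduce \propref{smallk} to the already-proved \propref{poincarenon-vanish} by controlling the difference between the Fourier coefficients of the ``Hecke-trick'' Poincar\'e series $P_{n,r;0}^{k,m}$ and those of the honest series $P_{n,r}^{k,m}$. For $k>\tfrac g2+2$ one knows that $P_{n,r;s}^{k,m}$ is holomorphic in $s$ in a neighbourhood of $s=0$, that $P_{n,r;0}^{k,m}\in J_{k,m,g}^{cusp}$, and (as stated in the excerpt, following \cite{bringmann2}) that it ``has the same Fourier properties'' as $P_{n,r}^{k,m}$. The crux is that the $(n^{\p},r^{\p})$-th Fourier coefficient of $P_{n,r;0}^{k,m}$ is again of the shape ``diagonal term $+\,2\pi i^{k}\det(2m)^{-1/2}(D^{\p}/D)^{\cdots}\sum_{c\ge1}(\text{Kloosterman})\cdot B_{k}(c)$'', where now $B_{k}(c)$ is not exactly the Bessel value $J_{k^{\p}}\!\bigl(\tfrac{2\pi\sqrt{DD^{\p}}}{\det(2m)c}\bigr)$ but a limit as $s\to0$ of an integral transform that degenerates to this Bessel function. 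So the first step is to write down, from \cite{bringmann2}, the explicit $(n,r)$-th coefficient of $P_{n,r;0}^{k,m}$ and isolate the analogue $S_{0}(n,r)$ of the quantity $S(n,r)$ from the excerpt; it suffices to prove $|S_{0}(n,r)|<\tfrac1{2\pi}$.

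**Next**, I would establish a uniform bound $|B_{k}(c)|\le \min\{1,\ \tfrac1{\Gamma(k^{\p}+1)}(\tfrac{x}{2})^{k^{\p}}\}$ with $x=\tfrac{2\pi\sqrt{DD^{\p}}}{\det(2m)c}$, \emph{valid for all $k$ in the stated range}, i.e.\ the same bound \eqref{besselestimate} that holds for the genuine Bessel function. For $k$ large this is just \eqref{besselestimate} and the identification of $B_{k}$ with $J_{k^{\p}}$; for the small-weight range $k\ge\tfrac{g+7}{2}$ one uses the integral representation of $B_{k}(c)$ coming from the Hecke trick and estimates it directly. Granting this bound, the computation in the proof of \propref{poincarenon-vanish} goes through verbatim: with $S:=\tfrac{2\pi D}{\det(2m)}$ one gets $|S_{0}(n,r)|\le \tfrac{2}{\Gamma(k^{\p}+1)}\bigl(\tfrac{S}{2}\bigr)^{k^{\p}}\sum_{c}\tfrac{2^{\omega(c)}}{c^{k-g-1}}$, and for $k\ge g+4$ (hence certainly for $k$ large) the tail series is $<\zeta(2)$ by \corref{stirling}'s argument, so $D\le \tfrac{k^{\p}}{\pi e}\det(2m)$ forces $|S_{0}(n,r)|<\tfrac1{2\pi}$ once $k$ exceeds an absolute constant. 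This explains the bound $\max\{C(m),\tfrac{g+7}{2}\}$: the $\tfrac{g+7}{2}$ is the convergence/holomorphy threshold for the Hecke-trick series together with $k-g-1\ge 2$ type conditions, and $C(m)$ absorbs both the ``large enough $k$'' from the Stirling estimate and any $m$-dependent loss in the small-weight Bessel-type bound.

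**The main obstacle** is the second step: getting the clean bound on $B_{k}(c)$ in the non-classical range of $k$. When $k$ is below the range where the series converges absolutely term-by-term, $B_{k}(c)$ is defined by analytic continuation, and one must show that the limiting integral still satisfies the same two-sided control as the Bessel function — in particular the crucial factor $\tfrac1{\Gamma(k^{\p}+1)}(\tfrac{x}{2})^{k^{\p}}$, which is what makes the $c$-sum converge and decay. I expect this to follow from the explicit shape of the Hecke-trick integral in \cite{bringmann2} (a confluent-hypergeometric / Whittaker-type integral whose $s=0$ specialization is $J_{k^{\p}}$), but verifying the uniformity — and pinning down exactly how large $k$, equivalently how the constant $C(m)$, must be — is where the real work lies; everything after that is a rerun of the proof of \propref{poincarenon-vanish} via \corref{stirling}.
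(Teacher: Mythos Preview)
Your proposal misidentifies the new ingredient. The paper states immediately before \propref{smallk} that for $k>\tfrac{g}{2}+2$ the series $P_{n,r;0}^{k,m}$ ``has the same Fourier properties as $P_{n,r}^{k,m}$''; in particular the $(n,r)$-th Fourier coefficient is given by exactly the same formula in \propref{poincarefourier}, with the honest Bessel function $J_{k^{\p}}$. There is no modified kernel $B_{k}(c)$ to analyse, and the Hecke-trick holomorphy threshold is $k>\tfrac{g+4}{2}$, not $\tfrac{g+7}{2}$. If you simply rerun the proof of \propref{poincarenon-vanish} as you suggest, you land back on the tail series $\sum_{c}2^{\omega(c)}c^{-(k-g-1)}$, which requires roughly $k\ge g+3$; you do not reach the smaller threshold $\tfrac{g+7}{2}$, and your $C(m)$ has nothing to depend on.

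What the paper actually uses is a sharper Kloosterman bound from \cite{kohnen}: there is a constant $C(m)$ such that
\[
|H_{m,c}(n,r,n,\pm r)|\le 2^{\omega(c)}\,c^{-1/2}\,(D,c)\qquad\text{for all }c\ge C(m),
\]
replacing the cruder $c^{g/2-1}$ in \eqref{kloostermanestimate}. One then splits $S(n,r)$ at $c=C(m)$, uses the old bound on the finitely many terms $c\le C(m)$ (absorbing them into a constant $C_{1}(m)$), and the improved bound for $c>C(m)$. This yields
\[
|S(n,r)|\le \frac{2\,C_{1}(m)}{\Gamma(k^{\p}+1)}\left(\frac{S}{2}\right)^{k^{\p}}\sum_{c\ge1}\frac{2^{\omega(c)}}{c^{\,k-(g+3)/2}},
\]
and the exponent $k-(g+3)/2$ is exactly what forces $k>\tfrac{g+7}{2}$ for convergence. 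Thus both constants in the statement---$C(m)$ and $\tfrac{g+7}{2}$---come from the Kloosterman side, not from any Bessel-function subtlety.
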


\begin{rmk}
Though \propref{poincarenon-vanish} is applicable here, the above theorem accounts for (possibly) smaller values of $k$.
\end{rmk}

\begin{proof}  This theorem again follows from the arguments of the proof of \propref{poincarenon-vanish}. Here we use the following estimate for Kloosterman sums of degree $g$ (see \cite[p.508,512]{kohnen}):
\[ H_{m,c}(n,r,n\pm r) \leq 2^{\omega(c)} c^{-1/2 } (D,c) , \, \forall c \geq C(m) \]
where $C(m)$ is a constant depending on $m$.  
With the notation of \propref{poincarenon-vanish}, we have for some positive constant $C_{1}(m)$,
\begin{eqnarray*}
\begin{split}
|S(n,r)|  & \leq \underset{1\leq c \leq C(m)} \sum \, \frac{2^{\omega(c)+1} c^{g/2 -1} (D,c)}{\Gamma(k^{\p} +1)}\left(\frac{S}{2c}\right)^{k^{\p}} + \underset{c > C(m)} \sum \, \frac{ 2^{\omega(c)+1} c^{-1/2} (D,c)}{\Gamma(k^{\p} +1)}\left(\frac{S}{2c}\right)^{k^{\p}} \\  
& \leq C(m)^{(g-1)/2} \underset{1\leq c \leq C(m)} \sum \, \frac{2^{\omega(c)+1} c^{-1/2} (D,c)}{\Gamma(k^{\p} +1)}\left(\frac{S}{2c}\right)^{k^{\p}} + \underset{c > C(m)} \sum \, \frac{ 2^{\omega(c)+1} c^{-1/2} (D,c)}{\Gamma(k^{\p} +1)}\left(\frac{S}{2c}\right)^{k^{\p}} \\
& \leq \frac{2 C_{1}(m)}{\Gamma(k^{\p} +1)} \left(\frac{S}{2} \right)^{k^{\p}} \underset{c} \sum \frac{2^{\omega(c)}}{c^{k-(g+3)/2}}.\\
\end{split}
\end{eqnarray*}
The condition $k > \frac{g+7}{2}$ precisely guarantees convergence of the series above. The rest of the proof is identical to that of \propref{poincarenon-vanish}.
\end{proof}

\subsection{} We now come to the main result of this section.
\begin{proof}[\x{Proof of \thmref{order1+}} ]
We use Rankin's method as in \cite{rankin}. With $S(n,r)$ as above, we need to prove $|S(n,r)| < \frac{1}{2\pi}$. Define \[ \sigma = {k^{\p}}^{-1/6} , \q Q^{*} = \frac{2  \pi D}{ k^{\p} \det{( 2m)}}, \q  M(D) = \exp \left\{ \frac{ B_{1} \log{D}}{\log{ \log{2D}}} \right\}    \]

Define $H_{m,c}^{\pm}(n,r,n,r) = H_{m,c}(n,r,n,r)+ (-1)^{k} H_{m,c}(n,r,n,-r)$. Then we have $|S(n,r)| \leq    \det{(2m)}^{-1/2} |S_{1}(n,r)| +   \det{(2m)}^{-1/2} |S_{2}(n,r)| $, where
\begin{align*}
 & |S_{1}(n,r)| = \underset{1 \leq c \leq Q^{*}}\sum | H_{m,c}^{\pm}(n,r,n,r)| | J_{k^{\p}}\left( \frac{k^{\p} Q^{*}}{c} \right)| , \\
& |S_{2}(n,r)| = \underset{c \geq Q^{*}}\sum | H_{m,c}^{\pm}(n,r,n,r) | |J_{k^{\p}} \left( \frac{k^{\p} Q^{*}}{c} \right)| 
\end{align*}

We get after similar calculations as in \cite{rankin} (see also \cite{mozzochi}) that
\begin{align} 
|S_{1}(n,r)| & \leq A_{1} M(D) {Q^{*}}^{g/2 -1} \underset{d \mid D, d < Q^{*}}\sum 2^{\omega(d)} d \left\{ \frac{Q^{*} {\sigma}^{3}}{d} + 3 {\sigma}^{2}  \right\}  \nonumber \\  
& \leq A_{2}   M(D)^{3}  \frac{{Q^{*}}^{g/2 }}{{k^{\p}}^{1/2}} + A_{3} M(D)^{3} \frac{{Q^{*}}^{g/2 }}{{k^{\p}}^{1/3}} \nonumber \\
& \leq A_{4} M(D)^{3} \frac{ \left( \frac{ \pi D} {\det{(2m)}} \right)^{g/2 } } { {k^{\p}}^{g/2 + 1/2} } + A_{5} M(D)^{3} \frac{ \left( \frac{ \pi D} {\det{(2m)}} \right)^{g/2 } } { {k^{\p}}^{g/2 + 1/3} } \label{s1}
\end{align}

However, the other sum $S_{2}(n,r)$ needs to be handled differently. We have

\begin{align}
|S_{2}(n,r)| & \leq  \underset{Q^{*} < c \leq k^{\p} Q^{*}}\sum 2^{\omega(c)+1} c^{g/2 -1} (D,c) | J_{k^{\p}}\left( \frac{ 2 \pi D} { c \cdot \det{(2m)}} \right)|  \\
& +  \underset{ c > k^{\p} Q^{*}}\sum 2^{\omega(c)+1} c^{g/2 -1} (D,c) | J_{k^{\p}}\left(\frac{ 2 \pi D} { c \cdot \det{(2m)}}  \right)| \nonumber \\
& \leq 2 M(D)  \frac{  \left( \frac{ \pi D} { \det{(2m)}}  \right)^{k^{\p}} } {\Gamma{(k^{\p}+1)}} \underset{Q^{*} < c \leq k^{\p} Q^{*}}\sum \frac{1} {c^{k^{\p} -g/2} }  +  2 \underset{ c >  k^{\p} Q^{*}}\sum  c^{g/2 + 1} |J_{k^{\p}} \left( \frac{ 2 \pi D} { c \cdot \det{(2m)}}  \right)|   \nonumber \\
& \leq   \frac{ 2 M(D) } { {Q^{*}}^{k^{\p} - g/2 - 1 - \epsilon} }  \frac{  \left( \frac{ \pi D} { \det{(2m)}}  \right)^{k^{\p}} } {\Gamma{(k^{\p}+1)}} \underset{Q^{*} < c \leq k^{\p} Q^{*}}\sum \frac{1} {c^{1 + \epsilon} }   +  \,  \frac {2 \left( \frac{ \pi D} { \det{(2m)}}  \right)^{g/2 + 2 + \delta} }  {\Gamma{(k^{\p}+1)}}   \underset{c > k^{\p} Q^{*}}\sum \frac{1 }{c^{1 + \delta}}  \label{3.6}   \\
& \leq  \frac{ a_{0} M(D) } { {k^{\p}}^{ g/2 + 3/2 + \epsilon} }  \left( \frac{ \pi D} { \det{(2m)}}  \right)^{g/2 + 1 + \epsilon}   + \,  a_{1}  \frac{ \left( \frac{ \pi D} { \det{(2m)}}  \right)^{g/2 + 2 + \delta} }  { {k^{\p}}^{k^{\p} + 1/2} } \label{3.7} 
\end{align}
where $a_{i}, A_{j}$ are absolute constants, and $0 < \epsilon, \delta < 1$. Now for any $g \geq 1$, and $\alpha(g) = \frac{2}{3g+2}$; we choose $0 < \epsilon, \delta < \frac{1}{2}$ and find that $S_{1}(n,r)$ and $S_{2}(n,r)$ are small if we choose $k$ large. If $g \geq 5$, then we find that a better bound $\alpha(g) = \frac{2}{3g}$ works. This completes the proof.
\end{proof}

\section{Explicit basis for $J_{k,m}^{cusp}$ and proof of \thmref{g=1,indepofk}} \label{g=1}
\subsection{}
H. Petersson proved that the first $\ell = \dim{ S_{k} }$ Poincar\'{e} series $P^{k}_{1}, \cdots, P^{k}_{\ell}$ is a basis for the space of cusp forms $S_{k}$ for $SL(2, \mbb{Z})$. We prove the corresponding result for Jacobi forms. The proof is based on the dimension formula given in \cite{zagier}.

\begin{thm} \label{classicalbasis}
Let $k \geq m+12$. Then we have the following classical basis for $J_{k,m,1}^{cusp} \colon$ 
\begin{enumerate}

\item
If $k$ is even, $ \left \{ P^{k,m}_{D_{\mu}+ 4m \lambda_{\mu} , \mu} \right \}$ , $\mu = 0, 1, \ldots, m$ ; $\lambda_{\mu} = 0, 1, \ldots, \dim{ S_{k+2 \mu}} - \left [ \frac{\mu^{2}}{4m}\right ] - 1$  where $D_{\mu} := 4m\left( \left [ \frac{\mu^{2}}{4m}  \right ] +1 \right) - \mu^{2}$.

\item
If $k$ is odd, $ \left \{ P^{k,m}_{D_{\mu}+ 4m \lambda_{\mu} , \mu} \right \}$ , $\mu =  1, \ldots, m-1$ ; $\lambda_{\mu} = 0, 1, \ldots, \dim{ S_{k+2 \mu -1} } - \left [ \frac{\mu^{2}}{4m}\right ] - 1$ . 
\end{enumerate}
\end{thm}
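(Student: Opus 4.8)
The plan is to convert the basis assertion into a vanishing statement for Fourier coefficients and then read it off from the machinery behind Zagier's dimension formula. First I would record the reduction. By \propref{poincarefourier}(2) one has $\langle\phi,P^{k,m}_{n,r}\rangle=\lambda_{k,m,D}\,c_{\phi}(n,r)$ with $\lambda_{k,m,D}\ne0$, so if $W$ denotes the span inside $J_{k,m,1}^{cusp}$ of a finite family $\{P^{k,m}_{n_{i},r_{i}}\}$, its Petersson orthogonal complement is $W^{\perp}=\{\phi\in J_{k,m,1}^{cusp}:c_{\phi}(n_{i},r_{i})=0\text{ for all }i\}$; hence $W=J_{k,m,1}^{cusp}$ if and only if no nonzero $\phi$ has all those coefficients zero. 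Thus the theorem splits into: (a) the listed family has exactly $\dim J_{k,m,1}^{cusp}$ members; (b) no nonzero $\phi\in J_{k,m,1}^{cusp}$ has all the listed Fourier coefficients equal to $0$.

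For (a) I would use that, for $g=1$, $c_{\phi}(n,r)$ depends only on $D=4mn-r^{2}$ and on $\mu:=r\bmod 2m$, with $c_{\phi}(D,\mu)=(-1)^{k}c_{\phi}(D,-\mu)$; this is why $\mu$ runs over $\{0,\dots,m\}$ when $k$ is even and over $\{1,\dots,m-1\}$ when $k$ is odd, the residues $\mu\equiv0,m$ contributing nothing for odd $k$ (cf.\ \lemref{trivialweight}). For fixed $\mu$ the admissible discriminants are exactly $D_{\mu},\,D_{\mu}+4m,\,D_{\mu}+8m,\dots$, so the theorem's list consists of the first $N_{\mu}$ of these in each class, where $N_{\mu}=\dim S_{k+2\mu}-[\mu^{2}/4m]$ for even $k$ and $N_{\mu}=\dim S_{k+2\mu-1}-[\mu^{2}/4m]$ for odd $k$. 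Zagier's dimension formula \cite{zagier} gives $\dim J_{k,m,1}^{cusp}=\sum_{\mu}N_{\mu}$, and the hypothesis $k\ge m+12$ is precisely what forces every $N_{\mu}\ge0$, so the list is well defined and has the right cardinality.

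For (b), let $\phi\in J_{k,m,1}^{cusp}$ be annihilated by all the listed functionals. By the sign symmetry this means that in the theta decomposition $\phi=\sum_{\mu\bmod 2m}h_{\mu}(\tau)\,\vartheta_{m,\mu}(\tau,z)$ — the standard device underlying Zagier's formula — each component $h_{\mu}(\tau)=\sum_{\lambda\ge0}c_{\phi}(D_{\mu}+4m\lambda,\mu)\,q^{(D_{\mu}+4m\lambda)/4m}$ has its first $N_{\mu}$ coefficients zero, i.e.\ vanishes at $\infty$ to order at least $N_{\mu}$ beyond its minimal order $D_{\mu}/4m$. I would then reuse the computation behind Zagier's formula to furnish $J_{k,m,1}^{cusp}$ with a reduced–echelon (``staircase'') basis relative to the Fourier coefficients indexed by admissible pairs $(D,\mu)$, ordered say by $D$: its pivot positions are exactly the $\sum_{\mu}N_{\mu}$ pairs listed in the theorem, precisely because within each residue class $\mu$ the orders of vanishing at $\infty$ attained by theta–components of Jacobi cusp forms run through exactly the $N_{\mu}$ consecutive values $D_{\mu}/4m,(D_{\mu}+4m)/4m,\dots,(D_{\mu}+4m(N_{\mu}-1))/4m$. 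A $\phi$ killing all pivot coefficients is then $0$ by back–substitution. The even and odd cases run in parallel, the only differences being the weight shift from $k+2\mu$ to $k+2\mu-1$ and the identical vanishing of the $\mu\equiv0,m$ components when $k$ is odd.

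The main obstacle — and the one genuinely non-routine step — is producing that staircase basis with pivots in exactly the prescribed positions. The bare equality $\dim J_{k,m,1}^{cusp}=\sum_{\mu}N_{\mu}$ does not suffice; what is needed is the finer, residue-class-by-residue-class content of the proof of Zagier's formula, namely that inside each class $\mu$ the achievable orders of vanishing at $\infty$ are the $N_{\mu}$ consecutive values starting at $D_{\mu}/4m$, with no ``unexpected'' relations among $\mu$-components that would displace a pivot. Once that structural input is granted, the remaining steps — the inner-product reduction, the symmetry of coefficients, the cardinality count, and the back-substitution — are entirely straightforward.
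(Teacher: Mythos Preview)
Your reduction is exactly the one in the paper: via \propref{poincarefourier}(2) one reduces to showing that no nonzero $\Phi\in J_{k,m}^{cusp}$ has all the listed Fourier coefficients equal to zero, together with a cardinality count. The paper sets this up the same way (orthonormal basis, $d\times d$ matrix, hypothetical linear relation giving such a $\Phi$).

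Where you diverge is in the key step, and there the proposal has a gap. You invoke the theta decomposition $\Phi=\sum_{\mu}h_{\mu}\vartheta_{m,\mu}$ and want a residue-class-by-residue-class ``staircase'' argument: each $h_{\mu}$ has its first $N_{\mu}$ coefficients zero, so $\Phi=0$. But the $h_{\mu}$ are not independent scalar modular forms; they transform as a \emph{vector}-valued form of weight $k-1/2$ under $SL(2,\mbb{Z})$, and nothing in the theta picture tells you that the achievable orders of vanishing in each $\mu$-slot are exactly the $N_{\mu}$ consecutive values you claim. You flag this yourself as the ``one genuinely non-routine step'' and defer to ``the computation behind Zagier's formula,'' but that computation is not about theta components --- it is the embedding via the Taylor/development operators $D_{2\nu}\colon J_{k,m}\to M_{k+2\nu}$.

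That is precisely the tool the paper uses instead. It writes out the Fourier expansion of $D_{2\nu}\Phi$ (a scalar cusp form of integral weight $k+2\nu$), reduces each $(n,r)$ appearing there to a pair $(n',r')$ with $0\le r'\le m$ and $n'\le\dim S_{k+2\nu}$ via $r\mapsto r-2mx$ and the invariance $c_{\Phi}(n,r)=c_{\Phi}(n',r')$, and concludes that the first $\dim S_{k+2\nu}$ Fourier coefficients of $D_{2\nu}\Phi$ vanish, hence $D_{2\nu}\Phi=0$ for every $\nu=0,\dots,m$. Since the $D_{2\nu}$ jointly embed $J_{k,m}$ into $\bigoplus_{\nu}M_{k+2\nu}$ (\cite{zagier}), this forces $\Phi=0$. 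So the missing ingredient in your outline is exactly the replacement of the theta decomposition by the $D_{2\nu}$ operators, which land in \emph{scalar} spaces where ``first $d_{\ell}$ coefficients determine the form'' is available off the shelf.
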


\begin{proof}
We prove the Theorem for $k$ even, the other case is analogous. The condition $k \geq m+12$ ensures $\dim{ S_{k+2 \mu}} \geq \left [ \frac{\mu^{2}}{4m} \right] +1$ (see \cite[p.103]{zagier}). The proof follows Petersson's argument in the elliptic case (see \cite{petersson}, \cite{smart}). Let $d = \dim{J_{k,m}^{cusp}}$ and $\phi_{1}, \ldots, \phi_{d}$ be a orthonormal basis. We write 
\begin{align*}
& \phi_{j}(\tau,z) = \underset{\underset{D^{\p}>0, D^{\p} \equiv -r^{2} \pmod{4m}} {r \in \mbb{Z}} }\sum c_{j}(D^{\p},r) e \left( \frac{D^{\p}+ r^{2}}{4m} \tau + rz \right) \mbox{ and } \\
& P^{k,m}_{ D_{\mu} +4m \lambda_{\mu},\mu} = \lambda_{ k,m,D_{\mu} +4m \lambda_{\mu} }^{-1} \, \sum_{j=1}^{d} c_{j}(D_{\mu }+4m \lambda_{\mu},\mu) \phi_{j}. 
\end{align*}
where $\mu$ and $\lambda_{\mu}$ varies as in the statement of the Theorem. We get a $d \times d$ matrix indexed by pairs $(D_{\mu}+4m \lambda_{\mu} ,\lambda_{\mu})$ and $j$. It suffices to prove the matrix is invertible. If not, let there be a linear relation \[ \sum_{j=1}^{d}  \xi_{j} \, c_{j}(D_{\mu }+4m \lambda_{\mu},\mu) = 0, \, (\xi_{1}, \ldots, \xi_{d}) \neq (0, \ldots,0), \, \mbox{for all}\, (D_{\mu}+4m \lambda_{\mu} ,\mu). \]

\texttt{Claim :} Considering the non-zero Jacobi form $\Phi := \sum_{j=1}^{d}  \xi_{j} \phi_{j}$, we see that the Fourier coefficients $c_{\Phi} (D_{\mu }+4m \lambda_{\mu},\mu)$ ($\mu$ and $\lambda_{\mu}$ as in the Theorem) are zero. This implies that $D_{2\mu} \Phi = 0$ for $\mu = 0, \ldots, m$, (see \cite[p.32]{zagier} for the definition of operators $D_{2\mu}$) which shows that $\Phi = 0$, a contradiction.

\texttt{Proof of claim :} Let $\Phi \in J_{k,m}^{cusp}$. Then we have the following Fourier expansion of the modular form $D_{2 \nu} \Phi$ of weight $k + 2 \nu$, (cf. \cite[p. 32]{zagier}, $k \mbox{ even } \, , \nu = 0, \ldots, m$) 
\begin{equation} 
D_{2 \nu} \Phi = A_{k,\nu} \underset{n \geq 0}\sum \left(  \underset{r \colon r^{2} < 4mn}\sum   \left(  \underset{0 \leq \mu \leq \nu} \sum \frac{(k+ 2\nu - \mu -2)! }{(k + 2 \nu -2) ! } \frac{ (-mn)^{\mu} \, r^{2 \nu - 2\mu}} { \mu! \, (2 \nu - 2 \mu)!}    \right) c_{\Phi}(n,r) \right) q^{n} \label{Dfourier} 
\end{equation}
where, $A_{k, \nu} := (2 \pi i)^{-  \nu} \frac{(k+2 \nu -2)! \, (2 \nu)!} { (k+ \nu -2)!}$ and $q := e(\tau)$.

Let $\ell$ be an a positive even integer. Let $d_{\ell} := \dim{ S_{\ell}}$. Since an elliptic cusp form $f = \sum_{n=1}^{\infty} a(n,f) q^{n}$ of weight $\ell$ is determined by the first $d_{\ell}$ of it's Fourier coefficients $a(1,f), \ldots, a(d_{\ell},f)$, looking at equation~\ref{Dfourier} we need to prove that $c_{\Phi}(n_{\nu},r_{\nu}) = 0$ for all $r_{\nu}$ such that $r_{\nu}^{2} < 4mn_{\nu} \,, 0 \leq r_{\nu} \leq m $ and all $n_{\nu}$ such that $ [ \frac{r_{\nu}^{2}}{4m}] + 1 \leq n_{\nu} \leq d_{k+ 2 \nu}$ ($ \nu = 0, \ldots, m$). From now on let $\ell$ denote one of $k + 2 \nu$, ($\nu = 0, \ldots, m$) and for convenience, we drop the suffix $\nu$. To see this, first, if $|r| > 2m$ in equation~\ref{Dfourier}, we can consider $-m \leq r^{\p} = r - 2m x \leq m $ for a suitable integer $x$ and an $n^{\p} \geq 1$ such that $4m n^{\p} - {r^{\p}}^{2} = 4 m n - r^{2}$ and use the fact that $c_{\Phi}(n^{\p},r^{\p}) = c_{\Phi}(n,r)$ and that $n \geq n^{\p} \geq 1$, so $n^{\p}$ also satisfies the same upper bound as that of $n$ (namely, $ [ \frac{{r^{\p}}^{2}}{4m}] + 1 \leq n^{\p} \leq d_{\ell}$). We can finally reduce to the case $0 \leq r \leq m$ since $c_{\Phi}(n,r) = c_{\Phi}(n,-r)$ as $\ell$ is even. 

But any such $n_{\nu}$ can be written as $n_{\nu} = [ \frac{\nu^{2}}{4m}] + 1 + \lambda_{\nu} = \frac{ D_{\nu} + 4m \lambda_{\nu} + \nu^{2} }{4m}$ with $0 \leq \nu \leq m$ and $D_{\nu}, \, \lambda_{\nu}$ as in the statement of the theorem. This proves the claim.
\end{proof}

\subsection{}
The Eichler-Zagier map for Jacobi forms of integral weight and index $1$ denoted by $Z_{1} \colon J_{k,1} \rightarrow M_{k-1/2}^{+}$ is defined by \[ Z_{1} \colon \underset{ \underset {D \equiv - r^{2} \pmod{4} }{D>0, r \in \mbb{Z}} }\sum c(D) e \left( \frac{D+r^{2}}{4} \tau + rz  \right) \mapsto \underset{0<D \in \mbb{Z}}\sum  c(D)    e (D \tau) \]
where the Fourier coefficient $c(D)$ does not depend on $r$.

Let $k$ be even. Following the notation in \cite{kohnen1}, let $P_{k-1,4,D}$ ($(-1)^{k-1}D \equiv 0,1 \pmod{4}$) be the Poincar\'{e} series in $M_{k-1/2}^{+}$. The Fourier development of $P_{k-1,4,D}$ is given below:

\begin{prop}[Kohnen, \cite{kohnen1}]
\begin{equation*}
P_{k-1,4,D} (\tau) = \underset{ t \geq 1, (-1)^{k-1}n \equiv 0,1 (4) } \sum g_{D}(t) e(t \tau), \q \mbox{with}
\end{equation*}
\begin{align}
g_{D} (t) = \frac{2}{3} \left[ \delta_{D,t} + (-1)^{k/2} \pi \sqrt{2} (t/D)^{k/2 - 3/4} \underset{ c \geq 1}\sum H_{c}(t,D) J_{k - 3/2}\left(\frac{\pi}{c} \sqrt{tD} \right)      \right] .
\end{align}
Here $\delta_{t,D}$ is the Kronecker delta, and,
\begin{align}
H_{c}(t,D) = (1 - (-1)^{k-1}i) \left( 1 + \left( \frac{4}{c} \right) \right) \frac{1}{4c} \underset{ \delta(4c)^{*} }\sum  \left( \frac{4c}{\delta} \right)  \left( \frac{-4}{\delta} \right)^{k - 1/2} e_{4c} ( t \delta + D \delta^{-1} )
\end{align}
\end{prop}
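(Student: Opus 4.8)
Since this Proposition is quoted from \cite{kohnen1}, the plan is only to indicate the structure of Kohnen's computation, which is the half-integral weight plus-space version of Petersson's classical unfolding of a Poincar\'{e} series; the bookkeeping is carried out in loc.\ cit.

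First I would recall that $P_{k-1,4,D}$ is, by definition, the image under Kohnen's plus-space projection $\mathrm{pr}^{+}$ of the ordinary weight $k-1/2$ Poincar\'{e} series on $\Gamma_{0}(4)$,
\[
G_{k-1,4,D}(\tau) = \sum_{\gamma \in \Gamma_{\infty}\backslash\Gamma_{0}(4)} j(\gamma,\tau)^{-(2k-1)}\, e\!\left(D\,\gamma\tau\right),
\]
where $j(\gamma,\tau)$ is Shimura's theta-automorphy factor and $\Gamma_{\infty}$ is the stabiliser of the cusp $\infty$. Then I would extract the $t$-th Fourier coefficient by $g_{D}(t) = \int_{0}^{1} P_{k-1,4,D}(u+iv)\,e(-t(u+iv))\,du$, which is independent of $v>0$ since $k$ is large enough for absolute convergence, and split the coset sum into the contribution of the identity coset and that of the cosets with $c\geq 1$.

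The identity coset contributes $e(D\tau)$, giving the Kronecker term $\delta_{D,t}$; the overall constant $\tfrac{2}{3}$ records the index appearing in the definition of the plus space and of $\mathrm{pr}^{+}$. For $c\geq 1$ I would parametrise the remaining cosets by $c$ (the values occurring in $\Gamma_{0}(4)$) and $\delta$ modulo $4c$ with $(\delta,4c)=1$; writing $\gamma\tau = \tfrac{a}{c}-\tfrac{1}{c(c\tau+d)}$ and substituting, the sum over $\delta$ collects the theta-multiplier into the quadratic symbols $\bigl(\tfrac{4c}{\delta}\bigr)\bigl(\tfrac{-4}{\delta}\bigr)^{k-1/2}$ and the two exponentials into $e_{4c}(t\delta + D\delta^{-1})$, that is, into the twisted Sali\'{e}-type sum $H_{c}(t,D)$ of the statement. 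The factor $\bigl(1+(\tfrac{4}{c})\bigr)$ and the restriction $(-1)^{k-1}n\equiv 0,1\pmod 4$ are exactly the combinatorial effect of applying $\mathrm{pr}^{+}$, and the factor $(1-(-1)^{k-1}i)$ is the value of the associated quadratic Gauss sum. Finally, the remaining $u$-integral, after the translation $u\mapsto u-d/c$ and a contour rotation, is a standard Hankel/Bessel integral, which evaluates to a constant multiple of $(t/D)^{k/2-3/4}\,J_{k-3/2}\!\bigl(\tfrac{\pi}{c}\sqrt{tD}\bigr)$; tracking constants through this step produces the factor $(-1)^{k/2}\pi\sqrt{2}$ and fixes the power of $c$ hidden in the normalisation of $H_{c}(t,D)$.

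The genuinely delicate point, and the one I would lean on \cite{kohnen1} for, is the simultaneous handling of the half-integral weight multiplier system and the explicit form of $\mathrm{pr}^{+}$: one must verify that after projection precisely the residue classes with $(-1)^{k-1}n\equiv 0,1\pmod 4$ survive, that the naive weight-$(k-1/2)$ Kloosterman sum is replaced by the character sum $H_{c}(t,D)$ with the displayed symbols and the $1/(4c)$ normalisation, and that all the scalar Gauss-sum factors assemble into $(1-(-1)^{k-1}i)$. Everything else is the routine Poincar\'{e}-series computation familiar from the integral-weight case, so I would simply refer to \cite{kohnen1} for the details.
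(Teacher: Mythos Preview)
Your sketch is fine, but there is nothing to compare it against: the paper does not prove this Proposition at all---it is simply quoted from \cite{kohnen1} (hence the attribution ``Kohnen'' in the heading) and used as input for \propref{kloorelns} and \propref{j2h}. What you have written is a reasonable outline of Kohnen's original computation (Petersson unfolding on $\Gamma_{0}(4)$ in weight $k-1/2$, followed by the plus-space projection), which is exactly what the citation points to, so in that sense your approach coincides with the intended one.
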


\begin{defi}
(i) Let $w$ be an integer, $c \geq 1$ be even and $u,v \equiv 0,(-1)^{w} \pmod{4}$. Let $\alpha \in \{ 1,2 \}$. We define the following exponential sum,
\begin{align} \mathbb{H}_{\alpha c}(u,v) := (1 - (-1)^{w}i)  \frac{1}{4c} \underset{ 1 \leq \delta \leq \alpha c-1, \, (\delta, 4c) =1 }\sum  \left( \frac{4c}{\delta} \right)  \left( \frac{-4}{\delta} \right)^{w + 1/2} e_{4c} ( u \delta + v \delta^{-1} ),
\end{align} where $\delta \delta^{-1} \equiv 1 \pmod{4c}$.
\end{defi}

\begin{lem} \label{pairity}
 Let $w$ be an integer, $c \geq 1$ be even and $u,v \equiv 0,(-1)^{w} \pmod{4}$. \\
1. $H_{c}(u,v) =  (1 + (-1)^{u + v + c/2} ) \, \mathbb{H}_{2 c}(u,v) $ 
Therefore, when $c \equiv 2 \pmod{4}$, $H_{c}(u,v)$ vanishes unless $u,v$ have different pairity. \\
2. Let $c \equiv 0 \pmod{4}$. Then $H_{c}(u,v) =   (1 + (-1)^{u + v } ) (1 + e_{4}(u - v) ) \, \mathbb{H}_{c}(u,v) $. \\
\end{lem}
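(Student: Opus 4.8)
The plan is to evaluate $H_{c}(u,v)$ directly from its definition (the same formula as in Kohnen's proposition, with $k-1$ replaced by $w$), exploiting that $c$ is even. First note that $1+(\tfrac{4}{c})=1$ here, since $(\tfrac{4}{c})=0$ for even $c$; thus $H_{c}(u,v)$, $\mathbb{H}_{2c}(u,v)$ and $\mathbb{H}_{c}(u,v)$ are all of the shape $(1-(-1)^{w}i)\tfrac{1}{4c}\sum_{\delta}g(\delta)$ with the \emph{same} summand $g(\delta):=(\tfrac{4c}{\delta})(\tfrac{-4}{\delta})^{w+1/2}e_{4c}(u\delta+v\delta^{-1})$, only the range of $\delta$ differing: a complete reduced residue system mod $4c$ for $H_{c}$, the $\delta\in[1,2c-1]$ with $(\delta,4c)=1$ for $\mathbb{H}_{2c}$, and the $\delta\in[1,c-1]$ with $(\delta,4c)=1$ for $\mathbb{H}_{c}$. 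I would pass from $H_{c}$ to $\mathbb{H}_{2c}$ by folding the residue system mod $4c$ under the fixed-point-free involution $\delta\mapsto\delta+2c$ of $(\mbb{Z}/4c\mbb{Z})^{*}$ (well-defined since $c$ is even), for which $\{\delta: 1\le\delta\le 2c-1,\ (\delta,4c)=1\}$ is a complete set of orbit representatives (the partners $\delta+2c$ fill out $[2c+1,4c-1]$); and, when $4\mid c$, pass from $\mathbb{H}_{2c}$ to $\mathbb{H}_{c}$ by splitting the range $[1,2c-1]$ at $c$, since then $\delta\mapsto\delta+c$ carries the representatives in $[1,c-1]$ bijectively onto those in $[c+1,2c-1]$.

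The whole proof then reduces to computing the effect of the translations $\delta\mapsto\delta+2c$ and $\delta\mapsto\delta+c$ on the three factors of $g(\delta)$. For the additive character one uses the congruences $(\delta+2c)^{-1}\equiv\delta^{-1}+2c\pmod{4c}$, valid for every even $c$, and $(\delta+c)^{-1}\equiv\delta^{-1}-c\pmod{4c}$, valid when $4\mid c$; both follow from $\delta^{-1}\equiv\delta\pmod{4}$ together with $4c\mid c^{2}$ (in the second case). These give $e_{4c}(u(\delta+2c)+v(\delta+2c)^{-1})=(-1)^{u+v}e_{4c}(u\delta+v\delta^{-1})$ and $e_{4c}(u(\delta+c)+v(\delta+c)^{-1})=e_{4}(u-v)\,e_{4c}(u\delta+v\delta^{-1})$. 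The factor $(\tfrac{-4}{\delta})^{w+1/2}$ depends only on $\delta$ modulo $4$, hence is unchanged by $\delta\mapsto\delta+2c$ (as $4\mid 2c$ always) and by $\delta\mapsto\delta+c$ (as $4\mid c$ in that case).

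The one delicate ingredient is the Kronecker symbol $(\tfrac{4c}{\delta})=(\tfrac{c}{\delta})$ (using $(\tfrac{4}{\delta})=1$ for odd $\delta$). Write $c=2^{a}c_{0}$ with $c_{0}$ odd and $a\ge 1$, and use $(\tfrac{c}{\delta})=(\tfrac{2}{\delta})^{a}(\tfrac{c_{0}}{\delta})$. The factor $(\tfrac{c_{0}}{\delta})$ is unchanged under either translation by quadratic reciprocity for the Jacobi symbol, since the residues mod $c_{0}$ and mod $4$ that control it are unchanged (as $c_{0}\mid c$ and $4\mid 2c$, resp. $4\mid c$). For $(\tfrac{2}{\delta})^{a}$ one invokes the mod-$8$ periodicity of $(\tfrac{2}{\cdot})$: translating by $2c$ contributes a sign $-1$ exactly when $a=1$, i.e. $c\equiv 2\pmod{4}$ (from $(\tfrac{2}{\delta+4})=-(\tfrac{2}{\delta})$), and translating by $c$ (with $4\mid c$) contributes no sign — a possible sign in the case $a=2$ is killed on being squared. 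This yields $(\tfrac{4c}{\delta+2c})=(-1)^{c/2}(\tfrac{4c}{\delta})$ and, when $4\mid c$, $(\tfrac{4c}{\delta+c})=(\tfrac{4c}{\delta})$. I expect the bookkeeping over the $2$-adic cases $a=1$, $a=2$, $a\ge 3$ to be the main point requiring attention.

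Assembling the three computations: $g(\delta+2c)=(-1)^{u+v+c/2}\,g(\delta)$, so summing over orbit representatives gives $H_{c}(u,v)=(1+(-1)^{u+v+c/2})\,\mathbb{H}_{2c}(u,v)$, which is part (1); when $c\equiv 2\pmod{4}$ the prefactor is $1-(-1)^{u+v}$, which vanishes unless $u$ and $v$ have opposite parity, giving the stated consequence. For part (2), when $4\mid c$ we have $c/2$ even, hence $H_{c}(u,v)=(1+(-1)^{u+v})\,\mathbb{H}_{2c}(u,v)$; and $g(\delta+c)=e_{4}(u-v)\,g(\delta)$ yields, on splitting $[1,2c-1]$ at $c$, the identity $\mathbb{H}_{2c}(u,v)=(1+e_{4}(u-v))\,\mathbb{H}_{c}(u,v)$. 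Combining the last two relations gives $H_{c}(u,v)=(1+(-1)^{u+v})(1+e_{4}(u-v))\,\mathbb{H}_{c}(u,v)$, which is part (2).
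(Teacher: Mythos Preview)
Your proposal is correct and follows exactly the same approach as the paper: split the exponential sum into halves via the translations $\delta\mapsto\delta+2c$ (for part~1) and then $\delta\mapsto\delta+c$ (for part~2 when $4\mid c$), and compute how each factor of the summand transforms. The paper's own proof is terse---it writes out the splitting for part~1 and omits part~2 entirely---so your version is in fact more complete, particularly in tracking the sign $(-1)^{c/2}$ coming from the Kronecker symbol $(\tfrac{4c}{\delta+2c})$ via the $2$-adic case analysis, which the paper leaves implicit.
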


\begin{proof}
The proofs are obtained by splitting the exponential sums into half of the modulus in the sums. 
$1.$ This is easily seen by splitting the exponential sum as follows:
\begin{align}
& \underset{ 1 \leq \delta \leq 4c -1 \, , (\delta,4c)=1 }\sum  \left( \frac{4c}{\delta} \right)  \left( \frac{-4}{\delta} \right)^{w + 1/2} e_{4c} ( u \delta + v \delta^{-1} ) \nonumber \\
& = \underset{ 1 \leq \delta_{1} \leq 2c -1 \, , (\delta_{1},4c)=1 }\sum  \left( \frac{4c}{\delta_{1}} \right)  \left( \frac{-4}{\delta_{1}} \right)^{w + 1/2} e_{4c} ( u \delta_{1} + v {\delta_{1}}^{-1} )  \nonumber \\
& + \underset{ 1 \leq \delta_{1} \leq 2c -1 \, , (\delta_{1},4c)=1 }\sum  \left( \frac{4c}{\delta_{1} + 2c} \right)  \left( \frac{-4}{\delta_{1} + 2c} \right)^{w + 1/2} e_{4c} \left( u (\delta_{1}+ 2c) + v ( {\delta_{1}}^{-1} + 2c ) \right)  \nonumber \\
& = (1 + (-1)^{u + v + c/2} ) \, \underset{ 1 \leq \delta_{1} \leq 2c -1 \, , (\delta_{1},4c)=1 }\sum  \left( \frac{4c}{\delta_{1}} \right)  \left( \frac{-4}{\delta_{1}} \right)^{w + 1/2} e_{4c} ( u \delta_{1} + v {\delta_{1}}^{-1} ) 
\end{align}
This gives $1$. We omit the proof of $2$, as it is on the same lines as that of $1$.
\end{proof}

\begin{defi} Let $u,v,w \in \mbb{Z}$, $c \geq 1$. Define
\begin{align}
(i) \q & {\mathscr{H}}^{\p}_{c}(u,v) := \frac{1}{c} \, \left( \frac{4}{-c } \right) \, \left( \frac{-4}{c } \right)^{- w - 1/2} \, \underset{\delta(c)^{*}}\sum \left(  \frac{\delta}{c} \right)  e_{c} \left( u \delta + v \delta^{-1}  \right) \\
(ii) \q & \mathscr{H}_{4 c}(u,v) := \left( 1 + \left( \frac{4}{c} \right) \right) \frac{1}{4c} \underset{ \delta(4c)^{*} }\sum  \left( \frac{4c}{\delta} \right)  \left( \frac{-4}{\delta} \right)^{w + 1/2} e_{4c} ( t \delta + D \delta^{-1} )
\end{align}
\end{defi}

\begin{rmk}
In applications we will always use \lemref{pairity} and the above definitions in the case $w = k-1$, with $k$ even. 
\end{rmk}

\begin{defi}
We denote by $G(a,b,c)$ the Gauss sum defined by \begin{align} G(a,b,c) := \underset{ n \pmod{c}} \sum e_{c}(an^{2} + bn ) \q  \text{where} \q a,b,c \in \mbb{Z}.
\end{align}
\end{defi}

\begin{prop}[\cite{berndt}] \label{gaussprops} We have the following: \\
$1$. $G(a,b,c_{1} c_{2}) = G(c_{2}a,b,c_{1}) \, G(c_{1}a,b,c_{2})$, where $(c_{1},c_{2}) = 1$. \\
$2$. Let $(a,c) = 1$.
\begin{equation} \label{gauss}
G(a,b,c) = \begin{cases} \epsilon_{c} \sqrt{c} \left(  \frac{a}{c} \right) e_{c} \left( - \psi(a) b^{2}  \right) & \text{if~} c \equiv 1 \pmod{2} \, , 4a \psi(a) \equiv 1 \pmod{c}  \\
2 \, G(2a, b, \frac{c}{2}) & \text{if~} c  \equiv 2 \pmod{4} \, , b  \equiv 1 \pmod{2} \\
0 & \text{if~} c \equiv 2 \pmod{4} \, , b = 0 \\
(1+i) {\epsilon_{a}}^{-1} \sqrt{c} \left(  \frac{c}{a} \right) & \text{if~} c \equiv 0 \pmod{4} \, , b = 0 \\
0 &  \text{if~} c \equiv 0 \pmod{4} \, , b \equiv 1 \pmod{2}.            
             \end{cases}
\end{equation}
\end{prop}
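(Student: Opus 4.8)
The plan is to reduce the proposition to two classical inputs: the Chinese Remainder Theorem, which yields part~$1$, and the evaluation of the ordinary quadratic Gauss sum $\sum_{n \bmod c} e_c(n^2) = \epsilon_c \sqrt{c}$ for odd $c$ together with its $2$-power analogue $\sum_{n \bmod 2^{\lambda}} e_{2^{\lambda}}(a n^2)$ for $\lambda \geq 2$ and $a$ odd; these last two evaluations I would simply quote (they are in, e.g., \cite{berndt}).

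For part~$1$ I would parametrize the residues modulo $c_1 c_2$ by $n = c_2 n_1 + c_1 n_2$, with $n_1$ ranging over residues mod $c_1$ and $n_2$ over residues mod $c_2$ (a bijection since $(c_1,c_2)=1$). Expanding $a n^2 + b n$ and reducing modulo $c_1 c_2$ annihilates the cross term $2 a c_1 c_2 n_1 n_2$, so that $e_{c_1 c_2}(a n^2 + b n) = e_{c_1}(c_2 a\, n_1^2 + b n_1)\, e_{c_2}(c_1 a\, n_2^2 + b n_2)$; summing over $n_1$ and $n_2$ separately gives $G(c_2 a, b, c_1)\, G(c_1 a, b, c_2)$.

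For part~$2$ I would run through the five cases. If $c$ is odd and $(a,c)=1$, then $4a$ is invertible mod $c$, so with $\psi(a)$ as in the statement one has $a n^2 + b n \equiv \psi(a)((2 a n + b)^2 - b^2) \pmod{c}$; as $n$ runs mod $c$ so does $2 a n + b$, hence $G(a,b,c) = e_c(-\psi(a) b^2) \sum_{m \bmod c} e_c(\psi(a) m^2) = e_c(-\psi(a) b^2) \left( \frac{\psi(a)}{c} \right) \epsilon_c \sqrt{c}$, and $\left( \frac{\psi(a)}{c} \right) = \left( \frac{a}{c} \right)$ because $\psi(a) \equiv (4a)^{-1} \pmod{c}$ and $4$ is a square modulo $c$. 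If $c \equiv 2 \pmod{4}$, write $c = 2 c'$ with $c'$ odd and apply part~$1$ with $(c_1,c_2) = (2, c')$: $G(a,b,2c') = G(c' a, b, 2)\, G(2a, b, c')$, where the first factor equals $1 + (-1)^{c' a + b}$; since $(a,c)=1$ forces $a$ odd and $c'$ is odd, this is $2$ when $b$ is odd and $0$ when $b = 0$, giving the two claimed lines. If $c \equiv 0 \pmod{4}$ and $b$ is odd, the substitution $n \mapsto n + c/2$ is a bijection mod $c$ that changes $a n^2 + b n$ by $b \cdot c/2 \equiv c/2 \pmod{c}$ (the other extra terms being $\equiv 0 \pmod{c}$), so it multiplies every summand by $e_c(c/2) = -1$ and forces $G(a,b,c) = 0$. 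Finally, if $c \equiv 0 \pmod{4}$ and $b = 0$, write $c = 2^{\lambda} m$ with $\lambda \geq 2$ and $m$ odd and use part~$1$ to factor $G(a,0,c) = G(m a, 0, 2^{\lambda})\, G(2^{\lambda} a, 0, m)$: the odd factor equals $\left( \frac{a}{m} \right) \epsilon_m \sqrt{m}$ by the first case, and the $2$-power factor is the tabulated $2$-adic Gauss sum $\sum_{n \bmod 2^{\lambda}} e_{2^{\lambda}}(a n^2)$.

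The main obstacle I expect is exactly this last case: reassembling $G(m a, 0, 2^{\lambda})$ and $\left( \frac{a}{m} \right) \epsilon_m \sqrt{m}$ into the single clean expression $(1+i)\,\epsilon_a^{-1}\, \sqrt{c}\, \left( \frac{c}{a} \right)$ requires careful bookkeeping with quadratic (Jacobi symbol) reciprocity, the dependence of $\epsilon_{\bullet}$ on residues modulo $4$, and the elementary identity $1 + i^a = (1+i)\epsilon_a^{-1}$ for odd $a$; all the dependence on $a$ modulo $8$ (and higher powers of $2$) coming from the $2$-adic Gauss sum must be shown to collapse into the single Jacobi symbol $\left( \frac{c}{a} \right)$. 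Everything else is routine. Since the whole proposition is classical, an acceptable alternative is simply to cite \cite{berndt} for the evaluations and to record only the multiplicativity argument of part~$1$.
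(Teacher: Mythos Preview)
Your proposal is correct and substantially more detailed than what the paper itself does: the paper states this proposition with a bracketed citation to \cite{berndt} and gives \emph{no proof at all}, treating the Gauss-sum evaluations as standard background input. So there is no ``paper's own proof'' to compare against beyond the reference.

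Your argument is the standard one and is fine as written. The CRT parametrization in part~$1$, the completion of the square for odd $c$, the factorization through $G(\cdot,\cdot,2)$ when $c\equiv 2\pmod 4$, and the sign-flip substitution $n\mapsto n+c/2$ when $4\mid c$ and $b$ is odd are all correct. Your honest flag on the last case (reassembling the odd and $2$-adic factors into $(1+i)\epsilon_a^{-1}\sqrt{c}\,(\tfrac{c}{a})$) is accurate: the bookkeeping with $\epsilon_m$, $(\tfrac{2}{a})$, and reciprocity is routine but fiddly, and for the purposes of this paper citing \cite{berndt} for that evaluation --- exactly as the paper does for the whole proposition --- is entirely acceptable.
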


\begin{prop} \label{kloorelns}
Let $c \geq 1$ and $k$ even. Then \, $H_{1,c}(n,r,n^{\p},\pm r^{\p}) =  H_{c}(D^{\p},D)$. 
\end{prop}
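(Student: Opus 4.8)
The plan is to compare the two Kloosterman-type sums directly by expressing $H_{1,c}(n,r,n^{\p},\pm r^{\p})$ explicitly in the case $g=1$ and then simplifying the resulting exponential sum until it matches $H_{c}(D^{\p},D)$. Recall from \propref{poincarefourier} that for $g=1$ we have, up to the normalizing power of $c$,
\[
H_{1,c}(n,r,n^{\p},r^{\p}) = c^{-3/2}\underset{x(c),\,y(c)^{*}}\sum e_{c}\bigl((mx^{2}+rx+n)\bar y + n^{\p}y + r^{\p}x\bigr)\,e_{2c}(r^{\p}m^{-1}r),
\]
with $m=1$ here (index $1$), so $e_{2c}(r^{\p}m^{-1}r) = e_{2c}(r^{\p}r)$. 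First I would carry out the inner sum over $x$ modulo $c$: after multiplying through by $\bar y$ inside, the $x$-part is a quadratic Gauss sum $G(\bar y, r\bar y + r^{\p}, c)$ in the notation of the excerpt, whose value is supplied by \propref{gaussprops}. Substituting that evaluation produces a sum over $y(c)^{*}$ of a Jacobi-type character twisted by $e_{c}$ of a linear-fractional expression in $y$; completing the square in the exponent should turn the argument into something of the shape $(r^{2}-4n)\,\overline{(\cdot)} + (r^{\p 2} - 4n^{\p})(\cdot)$, i.e. exactly $-D$ and $-D^{\p}$ up to signs and the factor $e_{2c}(r^{\p}r)$ absorbing the cross terms. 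This is the step where the definition $D = \det\left(\begin{smallmatrix} 2n & r \\ r & 2 \end{smallmatrix}\right) = 4n - r^{2}$ (for $m=1$) gets introduced naturally.

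Next I would reconcile the character and the modulus: the Gauss-sum evaluation brings in a Jacobi symbol $\left(\tfrac{\cdot}{c}\right)$ and a factor $\epsilon_{c}$ (or $(1+i)$-type factor when $c$ is even), and on the other side $H_{c}(D^{\p},D)$ as given by Kohnen's proposition runs over $\delta \pmod{4c}$ with the character $\left(\tfrac{4c}{\delta}\right)\left(\tfrac{-4}{\delta}\right)^{k-1/2}$ and the prefactor $(1-(-1)^{k-1}i)\bigl(1+\left(\tfrac{4}{c}\right)\bigr)\tfrac{1}{4c}$. To bridge these I would invoke \lemref{pairity} together with the auxiliary sums $\mathscr{H}^{\p}_{c}$ and $\mathscr{H}_{4c}$ just defined, which are precisely designed to re-index a sum over $\delta \pmod{4c}$ in terms of a sum over a complete reduced residue system mod $c$ (and conversely). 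The case split will mirror the one in \propref{gaussprops}: $c$ odd, $c\equiv 2\pmod 4$, and $c\equiv 0 \pmod 4$ should be handled separately, using part (1) of \lemref{pairity} in the first two and part (2) in the last. The hypothesis that $k$ is even makes $(-1)^{k}=1$ so that $H_{1,c}(n,r,n^{\p},r^{\p})$ and $H_{1,c}(n,r,n^{\p},-r^{\p})$ coincide (the sign of $r^{\p}$ can be absorbed by $x\mapsto -x$ together with $y\mapsto y$, since $D^{\p}$ depends only on $r^{\p 2}$), which is why both choices of sign give the same answer $H_{c}(D^{\p},D)$; I would record this symmetry early.

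I expect the main obstacle to be the bookkeeping in the even-$c$ cases: tracking the exact power of $(1+i)$, the behavior of $\epsilon_{c}$ and of $\left(\tfrac{-4}{\cdot}\right)^{k-1/2}$ under the change of modulus from $4c$ to $c$, and making sure the quadratic-reciprocity manipulations on the Jacobi symbols are valid (the symbols $\left(\tfrac{4c}{\delta}\right)$ are Jacobi symbols with possibly non-squarefree top, so one must be careful). The multiplicativity in \propref{gaussprops}(1) lets one reduce to prime-power $c$, which keeps the character computations manageable; after that the identity should fall out by matching the two explicit closed forms term by term. The convergence/normalization powers of $c$ on the two sides agree automatically once one notes that $H_{c}(t,D)$ as defined by Kohnen already carries the $1/4c$ and that $c^{g/2-1}=c^{-1/2}$ for $g=1$ matches what the Gauss sum $\sqrt{c}$ contributes against the $c^{-3/2}$ prefactor, so no separate argument is needed there.
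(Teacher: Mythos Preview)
Your approach is essentially the paper's own: recognize the $x$-sum as the Gauss sum $G(\bar y, r\bar y + r^{\p}, c)$, evaluate via \propref{gaussprops}, and split into the three residue classes of $c$ modulo $4$, then identify the resulting twisted sum over $y$ with $H_{c}(D^{\p},D)$ using the auxiliary sums $\mathscr{H}^{\p}_{c}$, $\mathscr{H}_{4c}$, $\mathbb{H}_{\alpha c}$ together with \lemref{pairity}. One small correction: \lemref{pairity} is stated only for even $c$, so in the odd case you cannot invoke it; instead the paper handles $c$ odd by a direct computation leading to $2(1+i)\mathscr{H}_{4c}(D^{\p},4^{-2}D)$ and then quotes \cite[equation~(37)]{kohnen1} (and likewise equation~(38) there for $c\equiv 2\pmod 4$) to finish the identification, rather than reducing to prime powers.
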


\begin{proof} 
We distinguish 3 cases, for classes of $c$ modulo $4$. Let $\epsilon_{\delta} = \begin{small} \begin{cases} 1 & \text{if~} \delta \equiv 1 \pmod{4} \, , \\
 i & \text{if~} \delta \equiv 3 \pmod{4}.                          
                           \end{cases} \end{small} $

$1$. $ \mathit{c \equiv 1 \pmod{2} }$ \\ We use the values of Gauss sums from table~(\ref{gauss}) in \propref{gaussprops}. 
\begin{align}
H_{1,c}(n,r,n^{\p},r^{\p}) & = c^{-3/2 } \, \underset{y(c)^{*}}\sum  G(\bar{y}, r \bar{y} + r^{\p}, c) \, e_{c}( n \bar{y} + n^{\p}y ) e_{2 c}(r r^{\p}) \label{12} \\
& = c^{-3/2 } \epsilon_{c} \sqrt{c} \, \underset{y(c)^{*}}\sum \left(  \frac{\bar{y}}{c} \right) e_{c} \left( - 4^{-1} y (r \bar{y} + r^{\p})^{2}  n + \bar{y} + n^{\p} y  \right)  e_{2 c}(r r^{\p})   \nonumber \\
& = \frac{ \epsilon_{c}}{c} \, \underset{y(c)^{*}}\sum \left(  \frac{\bar{y}}{c} \right)  e_{c} \left( D^{\p} y + 4^{-2} \bar{y} D  \right) e_{2}(D D^{\p} ) \nonumber \\
& = 2 (1+i) \mathscr{H}_{4c}(D^{\p}, 4^{-2} D) = 2 H_{c}(D^{\p},D), \, \text{after simplification}.
\end{align}
where $4 4^{-1} \equiv 1 \pmod{c}$ and the equality in the last line follows from \cite[p. 256, equation~(37)]{kohnen1}.

$2$. $\mathit{ c \equiv 2 \pmod{4}}$ \\ Let $c = 2 c^{\p}$, with $c^{\p}$ odd. From table~(\ref{gauss}), and \lemref{pairity}, we see that $H_{1,c}(n,r,n^{\p},r^{\p}) = 0 = H_{c}(D^{\p}, D)$ if $r$ and $r^{\p}$ or equivalently $D$ and $D^{\p}$ have the same pairity. When they have opposite pairity, using the multiplicative property of Gauss sum in \propref{gaussprops} and applying the formula from \ref{gauss} we have:
\begin{align}
H_{1,c}(n,r,n^{\p},r^{\p}) & = 2 c^{-3/2 } \underset{y(2 c^{\p})^{*}}\sum  G(2 \bar{y}, r \bar{y} + r^{\p}, c^{\p}) \, e_{2 c^{\p}}( n \bar{y} + n^{\p}y ) e_{4 c^{\p}}(r r^{\p}) \nonumber \\
&= 2 c^{-3/2 } \epsilon_{c^{\p}} \sqrt{c^{\p}} \, \underset{y(2 c^{\p})^{*}}\sum \left(  \frac{2 \bar{y}}{c^{\p}} \right) e_{c^{\p}} \left( - 8^{-1} y (r \bar{y} + r^{\p})^{2}  \right) e_{2 c^{\p}} \left( n \bar{y} + n^{\p} y \right) e_{4 c^{\p}}(r r^{\p})  \nonumber 
\end{align}
We make a change of variables $y \mapsto 2 y + c^{\p}$ and find after simplification that the above sum, (in which $y$ now varies over a reduced residue system modulo $c^{\p}$ and $ (2 y + c^{\p}) (2 \cdot 4^{-1} \bar{y} + c^{\p} ) \equiv 1 \pmod{2 c^{\p} }$, $4 4^{-1} \equiv 1 \pmod{c^{\p} }$) :
\begin{align}
&=  \frac{ \epsilon_{c^{\p}} }{\sqrt{2} c^{\p} } \underset{y(c^{\p})^{*}}\sum \left(  \frac{\bar{y}}{c^{\p}} \right)  e_{c^{\p}} \left( D^{\p} y + 4^{-3} \bar{y} D  \right) e_{2}(n+n^{\p} ) \nonumber \\
& = (1+i) \mathscr{H}_{8}(D^{\p},D) \, \mathscr{H}^{\p}_{c^{\p}}(D^{\p},4^{-3}D) = (1+i) \mathscr{H}_{8c^{\p}}(D^{\p},D) = H_{c}(D^{\p},D).
\end{align}
where the equalities in the last line follows from \cite[p. 256, equation~(38)]{kohnen1}.

$3$. $ \mathit{ c \equiv 0 \pmod{4}}$ \\ From table~(\ref{gauss}), and \lemref{pairity}, we see that $H_{1,c}(n,r,n^{\p},r^{\p}) = 0 = H_{c}(D^{\p}, D)$ if $r$ and $r^{\p}$ or equivalently $D$ and $D^{\p}$ have opposite pairity. When they have the same pairity, again applying the formula from \ref{gauss}, we have the following:
\begin{align}
H_{1,c}(n,r,n^{\p},r^{\p}) & =  c^{-3/2 } \, \underset{y(c)^{*}}\sum  G(\bar{y}, r \bar{y} + r^{\p}, c) \, e_{c}( n \bar{y} + n^{\p}y ) e_{2 c}(r r^{\p}) \nonumber \\
& = c^{-3/2 } \, \underset{y(c)^{*}}\sum G(\bar{y}, 0 , c) \, e_{4c}( D^{\p}y + D \bar{y}) \nonumber \\
& = (1 + i) c^{-3/2 } \, \underset{y(c)^{*}}\sum   {\epsilon_{\bar{y}}}^{-1} \sqrt{c} \left(  \frac{c}{\bar{y}} \right) \, e_{4c}( D^{\p}y + D \bar{y}) \nonumber \\
& = 4 \,  \mathbb{H}_{ c}(D^{\p},D)= H_{c}(D^{\p},D), 
\end{align}
where the equality in the last line follows from \lemref{pairity}($2$) with $w = k-1$ and the fact that $D \equiv D^{\p} \pmod{4}$.
\end{proof}

\begin{prop} \label{j2h}
$Z_{1}$ maps $P_{D,r} \in J_{k,1}^{cusp} $ to $ 3 \, P_{k-1,4,D} \in M_{k-1/2}^{+}$.
\end{prop}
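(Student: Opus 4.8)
The plan is to compute the image $Z_{1}(P_{D,r})$ coefficient by coefficient and recognize it as $3\,P_{k-1,4,D}$, the Kloosterman-sum identity of \propref{kloorelns} being the crucial input. First I would specialize the Fourier expansion in \propref{poincarefourier} to $g=1$, $m=1$, and $k$ even, writing $P_{D,r}=P^{k,1}_{n,r}$ with $D=4n-r^{2}$. In this case $\det(2m)=2$, $i^{k}=(-1)^{k/2}$, $(-1)^{k}=1$, the Bessel order $k-g/2-1$ equals $k-3/2$, the exponent $k/2-g/4-1/2$ equals $k/2-3/4$, and the Bessel argument $\frac{2\pi\sqrt{DD'}}{\det(2m)\,c}$ equals $\frac{\pi\sqrt{DD'}}{c}$, where $D'=4n'-r'^{2}$. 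A short observation disposes of the two $\delta$-terms: if $D=D'$ then $r^{2}\equiv r'^{2}\pmod 4$, so $r\equiv\pm r'\pmod 2$, whence $\delta_{1}(n,r,n',r')=\delta_{1}(n,r,n',-r')=\delta_{D,D'}$ and the two contributions add to $2\,\delta_{D,D'}$.

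Next I would apply \propref{kloorelns} (legitimate since $k$ is even), which gives $H_{1,c}(n,r,n',\pm r')=H_{c}(D',D)$. This has two consequences. First, the $(n',r')$-th Fourier coefficient of $P_{D,r}$ depends on $(n',r')$ only through $D'$ — the remaining factors (the power of $D'/D$ and the Bessel value) already depend only on $D'$ — so $Z_{1}$ is applicable to $P_{D,r}\in J_{k,1}^{cusp}\subset J_{k,1}$ and its value is well defined. Second, after the substitution the inner sum becomes $\sum_{c\ge 1}H_{c}(D',D)\,J_{k-3/2}\!\left(\frac{\pi}{c}\sqrt{DD'}\right)$, the $+r'$ and $-r'$ contributions being equal and hence producing an overall factor $2$.

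Collecting constants, the $D'$-th Fourier coefficient of $Z_{1}(P_{D,r})$ equals
\[
2\,\delta_{D,D'} + 2\sqrt{2}\,\pi\,(-1)^{k/2}\,(D'/D)^{k/2-3/4}\sum_{c\ge 1}H_{c}(D',D)\,J_{k-3/2}\!\left(\frac{\pi}{c}\sqrt{DD'}\right),
\]
which is exactly $3\,g_{D}(D')$ in the notation of Kohnen's expansion of $P_{k-1,4,D}$ recalled above (note $3\cdot\frac{2}{3}=2$ accounts for the two numerical factors $2$, and $2\pi\,\det(2m)^{-1/2}=\sqrt{2}\,\pi$). Summing over $D'>0$ yields $Z_{1}(P_{D,r})=3\,P_{k-1,4,D}$. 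The only genuinely delicate point is the bookkeeping of the normalizing constants — the $\det(2m)^{-1/2}$, the $\frac{2}{3}$ in $g_{D}$, the powers of $i$, and the factor $2$ coming from the $\pm r'$ symmetry — since the substantive equality of the two families of Kloosterman sums is already provided by \propref{kloorelns}, and absolute convergence of both Fourier expansions justifies the term-by-term comparison.
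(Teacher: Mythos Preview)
Your proof is correct and follows essentially the same route as the paper: compare the Fourier expansions of $P_{D,r}$ and $P_{k-1,4,D}$ term by term, reduce the equality of the nontrivial parts to the Kloosterman-sum identity of \propref{kloorelns}, handle the $\delta$-terms via the parity observation, and match the remaining numerical constants. You have in fact spelled out the constant bookkeeping (the factors $2$, $\det(2m)^{-1/2}$, $i^{k}$, and the $\tfrac{2}{3}$ in $g_{D}$) more explicitly than the paper does.
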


\begin{proof}
First trivially we have, $\delta_{1}(n,r,n^{\p},\pm r^{\p})  = \delta_{D,D^{\p} }$. Therefore comparing the two Fourier developments and noting that $k$ is even, we see that it is sufficient to prove for all $c \geq 1$ that $H_{1,c}(n,r,n^{\p},\pm r^{\p}) = const. \cdot H_{c}(D^{\p},D)$.
Combining $1$, $2$ and $3$ from \propref{kloorelns}, we finally arrive at the conclusion that when $k$ is even, 
\begin{align}
c_{n,r}(n^{\p},r^{\p}) = 3 \, g_{D}(D^{\p}) \, \mbox{ for all } n,r,n^{\p},r^{\p},
\end{align}
where $c_{n,r}(n^{\p},r^{\p})$ and $g_{D}(D^{\p})$ are the coefficients on the Fourier expansions of the relevant Poincar\'{e} series defined above.
This completes the proof of \propref{j2h}.
\end{proof}

\begin{prop}
There exist positive constants $k_{0}$ and $B$, where $ B > 4 \, \log 2$, such that, for all even $k \geq k_{0}$ and all positive integers $D \leq k^{2}exp \{ -B \log k/\log \log k\}$, the Poincar\'{e} series $P_{k-1,4,D}$ and hence also the Poincar\'{e} series $P_{D,r}^{k,1}$ does not vanish identically. 
\end{prop}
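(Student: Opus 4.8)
The plan is to transport the statement to the half-integral weight setting via \propref{j2h} and then run Rankin's method on $P_{k-1,4,D}$. Since $Z_1$ is linear and $Z_1(P_{D,r}^{k,1})=3\,P_{k-1,4,D}$, once $P_{k-1,4,D}\not\equiv 0$ is known the non-vanishing of $P_{D,r}^{k,1}$ follows at once; so it suffices to treat $P_{k-1,4,D}$, and for this we show that its $D$-th Fourier coefficient $g_D(D)$ is non-zero. Specializing the Fourier expansion of $P_{k-1,4,D}$ recalled above at $t=D$, where $\sqrt{tD}=D$, $(t/D)^{k/2-3/4}=1$ and $\delta_{D,t}=1$, gives
\[ g_D(D)=\frac{2}{3}\Bigl[\,1+(-1)^{k/2}\pi\sqrt{2}\,\mathscr{S}\,\Bigr],\qquad \mathscr{S}:=\sum_{c\ge 1}H_c(D,D)\,J_{k-3/2}\Bigl(\tfrac{\pi D}{c}\Bigr).\]
Hence if $|\mathscr{S}|<\tfrac{1}{\pi\sqrt{2}}$ then $g_D(D)$ lies in the open disc of radius $\tfrac23$ about $\tfrac23$, so $g_D(D)\neq 0$; it is therefore enough to establish $|\mathscr{S}|<\tfrac1{\pi\sqrt2}$ for even $k\ge k_0$ and $D$ in the stated range.

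The estimate of $\mathscr{S}$ follows Rankin's splitting argument from \cite{rankin} (see also \cite{mozzochi}), the decisive input being that the $H_c(D,D)$ are Sali\'e-type sums: by \lemref{pairity} and the Gauss-sum evaluations of \propref{gaussprops} they are explicitly computable, and one obtains a bound of the shape $|H_c(D,D)|\ll 2^{\omega(c)}(D,c)^{1/2}c^{-1/2}$, with the gcd entering to the power $\tfrac12$ rather than $1$. This is precisely the gain over the general degree-$g$ situation noted in the remark after \thmref{consecutive}, and it is what makes a range for $D$ of size $k^{2-o(1)}$ attainable here rather than the smaller range of \thmref{order1+}. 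Set $\nu=k-3/2$ and split the $c$-sum at the transition point $c\asymp \pi D/\nu$ of $J_\nu$. For $c$ with $\pi D/c$ at most a fixed fraction of $\nu$ (the exponentially small regime together with the region just below the transition), use $|J_\nu(x)|\le\Gamma(\nu+1)^{-1}(x/2)^\nu$ with Stirling to get geometric decay in $c$; inserting the Sali\'e bound, writing $(D,c)^{1/2}$ through a sum over $g\mid D$, and dominating the remaining $2^{\omega(c)}$ by $M(c)$ as in \cite{rankin}, this part is $o(1)$ uniformly in the admissible range. For the remaining $1\le c\lesssim \pi D/\nu$, where $\pi D/c\gtrsim\nu$ and $J_\nu$ is oscillatory or in transition, use the uniform bound $|J_\nu(x)|\ll\nu^{-1/3}$ and, away from $x\asymp\nu$, the sharper $|J_\nu(x)|\ll (x^2-\nu^2)^{-1/4}$; in the admissible range $\pi D/\nu=O(k)$, so there are $O(k)$ such terms, and after inserting the Sali\'e bound this part is bounded by a constant multiple of a product of $M(D)$-powers and divisor factors times a fixed negative power of $k'$, which is again $o(1)$.

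Collecting the two pieces, the hypothesis $D\le k^2\exp\{-B\log k/\log\log k\}$ with $B>4\log 2$ is exactly what makes $|\mathscr{S}|<\tfrac1{\pi\sqrt2}$ for $k$ large: the admissible constant $4\log 2$ is forced by the extremal orders of $2^{\omega(c)}$, of $(D,c)^{1/2}$ rearranged through $\sum_{g\mid D}$, and of $\sigma_0(D)$ (together with $\log D\le 2\log k$), in the same way the constant $3\log 2$ appears in \thmref{order1+} and the corresponding constant appears in \cite{rankin}. Choosing $k_0$ accordingly gives $g_D(D)\neq 0$ for all even $k\ge k_0$, hence $P_{k-1,4,D}\not\equiv 0$ and therefore $P_{D,r}^{k,1}\not\equiv 0$. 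The step I expect to be the main obstacle is the uniform Bessel-function bookkeeping near the transition point $c\asymp\pi D/\nu$: one must weigh the $\nu^{-1/3}$ (resp. $(x^2-\nu^2)^{-1/4}$) decay of $J_\nu$ against the $O(k)$ terms and the Sali\'e and divisor factors so that the net power of $k$ is genuinely negative throughout the claimed interval for $D$ and so that the resulting admissible constant is no larger than $4\log 2$.
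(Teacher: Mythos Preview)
Your proposal is correct and follows essentially the same approach as the paper, which simply records that the Fourier expansion of $P_{k-1,4,D}$ from \cite{kohnen1} allows one to run the Rankin--Mozzochi argument verbatim and then omits the details. Your outline supplies those details: the reduction to $g_D(D)\neq 0$ via \propref{j2h}, the Sali\'e-type bound $|H_c(D,D)|\ll 2^{\omega(c)}(D,c)^{1/2}c^{-1/2}$ (the crucial $\tfrac12$-power of the gcd is exactly what the introductory remark after \thmref{consecutive}---more precisely, item~(2) of the Remark block in the introduction---identifies as the obstruction in higher degree), and the splitting of the $c$-sum near the Bessel transition point; this is precisely the content of \cite{mozzochi} transplanted to the half-integral weight setting.
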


\begin{proof}
From the Fourier expansion of $P_{k-1,4,D}$ given in \cite{kohnen1}, we see that the proof is the same as in the case of integral weight Poincar\'{e} series for congruence subgroups of $SL(2, \mbb{Z})$ given in \cite{mozzochi}; so we omit it.
\end{proof}

\begin{proof}[\x{Proof of \thmref{g=1,indepofk} }.] We write $S(n,r) = S_{1}(n,r) + S_{2}(n,r)$, where 
\begin{align*}
& S_{1}(n,r) =  i^{k} \pi \sqrt{2} m^{-1/2} \underset{1 \leq c \leq \frac{\pi D}{m} } \sum  H_{m,c}^{\pm}(n,r,n,r)  J_{k^{\p}} \left( \frac{\pi D}{m c} \right) \\
& S_{2}(n,r) = i^{k} \pi \sqrt{2} m^{-1/2} \underset{ c > \frac{\pi D}{m} } \sum H_{m,c}^{\pm}(n,r,n,r)  J_{k^{\p}} \left( \frac{\pi D}{m c} \right)
\end{align*}

We use the following estimate of Bessel functions to estimate $S_{1}(n,r)$: $ | J_{\nu}(r) | \leq A r^{-1/3}$, where $\nu \geq 0$, $r \geq 1$ (cf. \cite[Lemma 3.4]{bessel1}, the constant $C$ appearing in the Lemma can be computed to be the constant $A$ in \thmref{g=1,indepofk} using \cite[p. 333]{stein}.)  

\begin{align}
| S_{1}(n,r) |  &  \leq \frac{  2 \sqrt{2} \pi } { m^{1/2} }  \underset{1 \leq c \leq \frac{\pi D}{m} } \sum  \frac{2^{ \omega(c)} (D,c)}{c^{1/2}} | J_{k^{\p}} \left( \frac{\pi D}{m c} \right) | \nonumber \\
& \leq  \frac{ 2 \sqrt{2}  m^{1/3} \pi^{ 2/3} } { D^{1/3} m^{1/2} }  M \left( \frac{\pi D}{m} \right)  \underset{1 \leq c \leq \frac{\pi D}{m} } \sum \frac{(D,c)}{c^{1/6} } \nonumber \\
& \leq  \frac{2 \sqrt{2}  \pi^{ 2/3} } { D^{1/3} m^{1/6} }  M \left( \frac{\pi D}{m} \right) \underset{d \mid D, d < \frac{\pi D}{m}} \sum d \nonumber \\
& \leq  \frac {2 \sqrt{2}  D^{2/3}\pi^{ 5/3}  } { m^{7/6} }  M \left( \frac{\pi D}{m} \right) \sigma_{0}(D) \label{4.1}
\end{align}

\begin{align}
| S_{2}(n,r) |  & \leq  \frac{  2 \sqrt{2} \pi} { m^{1/2} }  \underset{ c > \frac{\pi D}{m} } \sum c^{3/2} | J_{k^{\p}} \left( \frac{\pi D}{m c} \right) | \nonumber \\
& \leq  \frac{  2 \sqrt{2} \pi} { \Gamma{(k^{\p} + 1)} m^{1/2} } \underset{ c > \frac{\pi D}{m} }\sum c^{3/2} \left( \frac{\pi D}{m c} \right)^{3/2 + 2 } \nonumber \\
& \leq  \frac{  2 \sqrt{2} \pi^{9/2} D^{7/2 }}  { \Gamma{(k^{\p} + 1)} m^{4  } } \underset{ c > \frac{\pi D}{m} }\sum \frac{1} { c^{2 } } \leq \frac{  2 \sqrt{2} \pi^{13/2} D^{7/2 } } { 6 \, \Gamma{(k^{\p} + 1)} m^{4 } } \label{4.2}
\end{align}

From the bound given in \thmref{g=1,indepofk}, it follows from estimates~(\ref{4.1}) and~(\ref{4.2}) that $S_{1}$ and $S_{2}$ are both less than $ \frac{1}{2 }$ in absolute value. Finally, from the expression of the $(n,r)$-\textit{th} Fourier coefficient of $P^{k,m}_{n,r}$ given in \propref{poincarefourier}, we get the Theorem.
\end{proof}

\section{Further results} \label{conditional}

Recall the one dimensional Kloosterman sum for a positive integer $c,$
\begin{equation}\label{1dim-kloosterman} 
S(r,m;c) = \underset{(h ,c) = 1}{\sum_{h=1}^{c}} e_{c}(rh + mh^{\p}), \mbox{ where } hh^{\p} \equiv 1 \pmod{c} 
\end{equation}
It is well known that (see \cite[\S 3]{rankin} for example) the following relation holds for a prime $p$ :
\begin{equation}\label{1dimidentity} 
S(rp^{\rho},mp^{\mu};c) = S(r,mp^{\rho+\mu};c) + p S(rp^{\rho-1},mp^{\mu-1};c/p), \mbox{ where } p|c,p \nmid r, p\nmid m  \, (\rho,\mu \geq 1).
\end{equation}

\begin{defi}
\mbox{ We let } \begin{align} K_{m,c}(n,r,n^{\p},r^{\p})  &= \underset{x(c),y(c)^{*}}\sum e_{c}\left(( m[x]+ rx +n)\bar{y} + n^{\p}y + r^{\p}x \right) \, (x \in \mbb{Z}^{g}/c \mbb{Z}^{g} , r \in \mbb{Z}^{g})\\ 
&= c^{g/2+1}\,e_{2c}\left(-r^{\p}m^{-1}r^{t}\right)H_{m,c}(n,r,n^{\p},r^{\p})
\end{align}
\end{defi}

\begin{lem} \label{higherkloosterman}Let $p$ be a odd prime such that \q $ p|(c,m,r,r^{\p}), \,  p \nmid n, p\nmid n^{\p}.$ Then the following identity holds :
\begin{align}\label{klooidentity}
K_{mp^{\mu},c}(p^{\mu}n,p^{\mu}r,p^{\rho}n^{\p},r^{\p}) = K_{mp^{\rho+\mu},c}(p^{\rho+\mu}n,p^{\rho+\mu}r,n^{\p},r^{\p}) + p^{2}K_{mp^{\mu-1},c/p}(p^{\mu-1}n,p^{\mu-1}r,p^{\rho-1}n^{\p},r^{\p}/p) 
\end{align}

\begin{proof}
The proof follows by noting that, 
\begin{equation}\label{1tohigherkloo}
K_{m,c}(n,r,n^{\p},r^{\p}) = \underset{x \pmod{c}}\sum e_{c}(r^{\p}x) \, S(n^{\p}, m[x]+rx+n; c) \, ,
\end{equation}
from which the $L.H.S.$ and the first term of the $R.H.S.$ in~(\ref{klooidentity}) are taken care of by summing both sides of the equation~(\ref{1dimidentity}) with appropriate arguments over $x \pmod{c}$. For the last term, we split the summation in~(\ref{1tohigherkloo}) (replacing $(m,n,r,n^{\p};c)$ by $(p^{\mu-1}m,p^{\mu-1}n,p^{\mu-1}r,p^{\rho-1}n^{\p};\frac{c}{p})$ respectively) as  $x = \frac{c}{p} x_{1} + x_{2}$ , where $x_{1}$ (resp.) $ x_{2}$ range over $\mbb{Z}^{g}/p \mbb{Z}^{g}$ (resp.) $ \mbb{Z}^{g}/\frac{c}{p}\mbb{Z}^{g}.$ We have 
\begin{eqnarray*}
\begin{split}
&  \underset{x \pmod{c}}\sum e_{c}(r^{\p}x) \, S\left(p^{\rho-1}n^{\p}, p^{\mu-1}(m[x]+rx+n); c/p \right)   \\
& = \underset{x_{1},x_{2}}\sum e_{c}\left(r^{\p}(c/p \,x_{1} + x_{2})\right) S\left( p^{\rho-1}n^{\p},p^{\mu-1}\left((c/p \,x_{1}+x_{2})^{t}m(c/p \,x_{1}+x_{2}) + r(c/p \,x_{1}+x_{2})+n\right);c/p \right)\\
& =  \underset{x_{1}}\sum e_{p}\left(r^{\p} c/p \, x_{1}\right) \underset{x_{2}}\sum e_{c/p}\left(r^{\p}/p \,x_{2}\right) S\left(p^{\rho-1}n^{\p}, p^{\mu-1}(m[x_{2}]+rx_{2}+n); c/p \right) \\
& = p K_{mp^{\mu-1},c/p}(p^{\mu-1}n,p^{\mu-1}r,p^{\rho-1}n^{\p},r^{\p}/p) ,\\ 
\end{split}
\end{eqnarray*}
Therefore using ~(\ref{1dimidentity}) the lemma follows.
\end{proof}
\end{lem}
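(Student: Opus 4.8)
The plan is to reduce the $g$-dimensional exponential sum $K_{m,c}$ to the classical one-dimensional Kloosterman sum $S(\cdot,\cdot;c)$ of~(\ref{1dim-kloosterman}), and then to apply the known one-dimensional three-term relation~(\ref{1dimidentity}) termwise in an auxiliary summation variable. The starting point is the elementary identity
\[ K_{m,c}(n,r,n^{\p},r^{\p}) \;=\; \underset{x \pmod{c}}{\sum}\, e_{c}(r^{\p}x)\, S\!\left(n^{\p},\, m[x]+rx+n;\, c\right), \]
which follows from the definition of $K_{m,c}$ by carrying out the inner sum over $y \in (\mbb{Z}/c\mbb{Z})^{*}$ first: for each fixed $x$ that sum is, by definition, the Kloosterman sum $S\!\left(n^{\p}, m[x]+rx+n; c\right)$, with the factor $e_{c}(r^{\p}x)$ pulled out in front.

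Fix a residue $x \pmod{c}$ and write $A_{x} := m[x]+rx+n$. Since $p \mid m$ and $p \mid r$ we have $A_{x} \equiv n \pmod{p}$, hence $p \nmid A_{x}$; together with $p \nmid n^{\p}$ this places us in the range where~(\ref{1dimidentity}) applies, with $(n^{\p}, A_{x})$ in the roles of $(r, m)$ there and exponents $(\rho, \mu)$. It gives $S(p^{\rho}n^{\p}, p^{\mu}A_{x}; c) = S(n^{\p}, p^{\rho+\mu}A_{x}; c) + p\, S(p^{\rho-1}n^{\p}, p^{\mu-1}A_{x}; c/p)$. Inserting this into the displayed identity applied to the left-hand side of~(\ref{klooidentity}) --- and using $(mp^{\mu})[x]+(p^{\mu}r)x+p^{\mu}n = p^{\mu}A_{x}$, with the analogous dilation identities for the other terms --- breaks that left-hand side into two sums. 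The first, $\sum_{x(c)} e_{c}(r^{\p}x)\, S(n^{\p}, p^{\rho+\mu}A_{x}; c)$, is by the displayed identity again precisely $K_{mp^{\rho+\mu},c}(p^{\rho+\mu}n, p^{\rho+\mu}r, n^{\p}, r^{\p})$, the first term on the right of~(\ref{klooidentity}).

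The remaining step --- the only one with real content --- is to show that the second sum, $\sum_{x(c)} e_{c}(r^{\p}x)\, S(p^{\rho-1}n^{\p}, p^{\mu-1}A_{x}; c/p)$, reproduces the last term of~(\ref{klooidentity}). Here I would reindex $x = \tfrac{c}{p}\,x_{1} + x_{2}$ with $x_{1}$ ranging over $\mbb{Z}^{g}/p\mbb{Z}^{g}$ and $x_{2}$ over $\mbb{Z}^{g}/\tfrac{c}{p}\mbb{Z}^{g}$. Two observations then finish it. First, on expanding $m\!\left[\tfrac{c}{p}x_{1}+x_{2}\right]$, the pure $x_{1}$-term and the cross term are both multiples of $\tfrac{c}{p}$, so $A_{x} \equiv A_{x_{2}} \pmod{c/p}$ and the Kloosterman sum of modulus $c/p$ depends only on $x_{2}$. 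Second, $e_{c}(r^{\p}x) = e_{c}\!\left(\tfrac{c}{p}r^{\p}x_{1}\right)\, e_{c/p}\!\left(\tfrac{r^{\p}}{p}\,x_{2}\right)$, where $r^{\p}/p$ is an integral vector because $p \mid r^{\p}$. Thus the $x_{1}$-sum factors off and, since $p \mid r^{\p}$, collapses to a power of $p$, while the surviving $x_{2}$-sum is, by the displayed identity once more (now with index $mp^{\mu-1}$ and modulus $c/p$), equal to $K_{mp^{\mu-1},c/p}(p^{\mu-1}n, p^{\mu-1}r, p^{\rho-1}n^{\p}, r^{\p}/p)$. Combining this with the leading factor $p$ coming from~(\ref{1dimidentity}) produces the last term of~(\ref{klooidentity}).

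The main obstacle I anticipate is purely bookkeeping: tracking powers of $p$ correctly through the index dilations $m$ to $mp^{\mu}$, the exponents $\rho$ and $\mu$, and the reindexing $x = \tfrac{c}{p}x_{1}+x_{2}$, and checking that the reduction $A_{x} \equiv A_{x_{2}} \pmod{c/p}$ and the pointwise applicability of~(\ref{1dimidentity}) hold for every residue class $x$, including when $p^{2} \mid c$. Beyond this Chinese-remainder-type splitting and the termwise use of the one-dimensional relation, no estimate or deeper structural input is needed.
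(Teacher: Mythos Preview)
Your proposal is correct and follows exactly the paper's approach: rewrite $K_{m,c}$ as $\sum_{x\,(c)} e_{c}(r^{\p}x)\,S(n^{\p}, m[x]+rx+n;c)$, apply the one-dimensional relation~(\ref{1dimidentity}) termwise in $x$, and treat the modulus-$c/p$ piece via the splitting $x=\tfrac{c}{p}x_{1}+x_{2}$ so that the inner Kloosterman sum depends only on $x_{2}$ and the $x_{1}$-sum collapses. You are in fact slightly more careful than the paper in two places: you explicitly note $A_{x}\equiv n\not\equiv 0\pmod p$ so that~(\ref{1dimidentity}) is applicable for every $x$, and you spell out why $A_{x}\equiv A_{x_{2}}\pmod{c/p}$.
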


\begin{proof}[\x{Proof of \thmref{consecutive}}]:  From \lemref{higherkloosterman} we easily deduce that under 
the conditions of the lemma, in particular, when $p \mid c$,
\begin{align}
H_{mp^{\mu},c}\left(p^{\mu}n,p^{\mu}r,p^{\rho}n^{\p},r^{\p}\right) & = H_{mp^{\rho+\mu},c}\left(p^{\rho+\mu}n,p^{\rho+\mu}r,n^{\p},r^{\p}\right) \nonumber\\
& + p^{-\frac{g}{2} +1} H_{mp^{\mu-1},\frac{c}{p}}\left(p^{\mu-1}n,p^{\mu-1}r,p^{\rho-1}n^{\p},\frac{r^{\p}}{p}\right). \label{457}
\end{align}
In the case $p \nmid c,$ we note that we have the equality from the definition,  
\begin{align} H_{mp^{\mu},c}\left(p^{\mu}n,p^{\mu}r,p^{\rho}n^{\p},r^{\p}\right) = H_{mp^{\rho+\mu},c}\left(p^{\rho+\mu}n,p^{\rho+\mu}r,n^{\p},r^{\p}\right) \label{458} \end{align}

We sum equation~(\ref{457}) over $c \geq 1$ such that $p \mid c$, equation~(\ref{458}) over all $c \geq 1$ and add them. Gathering all of above and noting that $\frac{2 \pi \sqrt{D^{\p}D}}{det(2m)\cdot c}$ is the same in all the three sums  \, (putting $\rho = \mu$ and $n^{\p} = n \,,r^{\p} = p^{\mu}r$),  we get positive constants $\alpha_{1}$ and $\alpha_{2}$ , such that 
\begin{equation*}
c^{k,p^{\mu}m }\left( p^{\mu}n,p^{\mu}r\right) = \alpha_{1}\,c^{k,p^{2\mu}m }\left( p^{2\mu}n,p^{2\mu}r ; n,p^{\mu}r\right) + \alpha_{2} \,c^{k,p^{\mu-1}m} \left( p^{\mu-1}n,p^{\mu-1}r \right)
\end{equation*}
(where we have used the notation $ c_{P^{k,m}_{n,r}}(n,r) := c^{k,m}(n,r;n,r) = c^{k,m}(n,r)$).
This immediately implies~(\ref{consecutivekloo}) and thus completes the proof of \thmref{consecutive}.
\end{proof}

\begin{rmk}
The constants $ \alpha_{1},\alpha_{2} $ in the above proof can be determined explicitly and may give a better result in the same vein as \thmref{smallk} (see \cite{rankin}).
\end{rmk}

\end{document}